\newtheorem{tm}{Theorem}
\newtheorem{rems}{Remarks}
\newtheorem{lm}{Lemma}
\newtheorem{ex}{Example}
\newtheorem{prop}{Proposition}
\newtheorem{nota}{Notation}
\begin{document}

\title{On Descartes' rule of signs}
\author{Hassen Cheriha, Yousra Gati and Vladimir Petrov Kostov}
\address{Universit\'e C\^ote d'Azur, LJAD, France and 
University of Carthage, EPT - LIM, Tunisia}
\email{hassen.cheriha@gmail.com, hassan.cheriha@unice.fr}
\address{University of Carthage, EPT - LIM, Tunisia}
\email{yousra.gati@gmail.com} 
\address{Universit\'e C\^ote d'Azur, LJAD, France} 
\email{vladimir.kostov@unice.fr}

\maketitle 

\begin{abstract}
A sequence of $d+1$ signs $+$ and $-$ beginning with a $+$ is called  
a {\em sign pattern (SP)}. We say that the real polynomial 
$P:=x^d+\sum _{j=0}^{d-1}a_jx^j$, $a_j\neq 0$, 
defines the SP $\sigma :=(+$,sgn$(a_{d-1})$, $\ldots$, sgn$(a_0))$. 
By Descartes' rule of signs, for 
the quantity $pos$ of positive (resp. $neg$ of negative) roots of $P$, one has 
$pos\leq c$ (resp. $neg\leq p=d-c$), where $c$ and $p$ are the numbers of 
sign changes and sign preservations in $\sigma$; the numbers $c-pos$ 
and $p-neg$ are even. We say that $P$ realizes the SP $\sigma$ with the 
pair $(pos, neg)$. For SPs with $c=2$, we give some sufficient conditions for 
the (non)realizability of pairs $(pos, neg)$ of the form 
$(0,d-2k)$, $k=1$, $\ldots$, $[(d-2)/2]$. \\ 

{\bf Key words:} real polynomial in one variable; Descartes' rule of signs; 
sign pattern\\ 

{\bf AMS classification:} 26C10, 30C15
\end{abstract}

\section{Introduction}

In the present paper we consider a problem which is a natural
continuation of Descartes' rule of signs. The latter states that
the number of positive roots of a real univariate polynomial
(counted with multiplicity) is majorized by the number of sign
changes in the sequence of its coefficients. We focus on polynomials
without zero coefficients. Such a polynomial (say, of degree $d$) is
representable in the form $P:=x^d+a_{d-1}x^{d-1}+\cdots +a_1x+a_0$,
$a_j\in \mathbb{R}^*$. Denoting by $c$ and $p$
the numbers of sign changes and sign
preservations in the sequence $1$, $a_{d-1}$, $\ldots$, $a_1$, $a_0$
and by $pos$ and $neg$ the number of positive and negative roots of $P$
(hence $c+p=pos+neg=d$) one obtains the conditions

\begin{equation}\label{Descartes}
\begin{array}{llllll}
pos\leq c&,&neg\leq p&,&c+p=d&,\\ \\
c-pos\in 2\mathbb{N}\cup 0&,&p-neg\in 2\mathbb{N}\cup 0&,&
(-1)^{pos}={\rm sgn}~(a_0)&
\end{array}
\end{equation}
(the condition $neg \leq p$ results from Descartes' rule applied to the
polynomial $P(-x)$).

We call {\em sign pattern (SP)} a sequence of $+$ or $-$ signs
of length $d+1$ beginning with a $+$. We say that the polynomial
$P$ defines the SP $(+$, sgn$(a_{d-1})$, $\ldots$, sgn$(a_1)$, sgn$(a_0))$.
A pair $(pos, neg)$ satisfying conditions (\ref{Descartes}) is called
{\em admissible}. An admissible pair (AP) is called {\em realizable} if 
there
exists a polynomial $P$ with exactly $pos$ positive distinct and exactly
$neg$ negative distinct roots.

\begin{ex}
{\rm For $c=0$, the all-pluses SP is realizable with any AP (which is of the 
form
$(0,d-2k)$, $k=0$, $1$, $\ldots$, $[d/2]$, where $[\alpha ]$ denotes the
integer part of $\alpha \in \mathbb{R}$). Indeed, one can construct a 
polynomial
$P$ with $d$ distinct negative roots and $d-1$ distinct critical levels.
Then in the family of polynomials $P+t$, $t>0$, one encounters 
polynomials
with exactly $d-2$, $d-4$, $\ldots$, $d-2[d/2]$ negative distinct roots 
and
with no positive roots (as $t$ increases, the polynomial $P+t$ loses two-by-two its real roots; each time two coalescing real roots give birth to a complex conjugate pair).}
\end{ex}

\begin{prop}\label{propc1}
(1) Any SP with $c=1$ is realizable with any AP of the form $(1,d-1-2k)$,
$k\leq [(d-1)/2]$. 

(2) Any SP is realizable with the AP $(c,p)$.
\end{prop}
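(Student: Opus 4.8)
The plan is to prove (2) by induction on the degree $d$ and then (1) by an explicit construction followed by a small perturbation; part (2) is not needed for part (1).

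For (2) I would induct on $d$, the cases $d\le 1$ being trivial (the polynomial $x+1$ realizes the pattern $(+,+)$ with $(0,1)$, and $x-1$ realizes $(+,-)$ with $(1,0)$). Given $\sigma=(\sigma_0,\dots,\sigma_d)$, $\sigma_0=+$, with $c$ sign changes and $p=d-c$ preservations, delete the last entry to form $\sigma'=(\sigma_0,\dots,\sigma_{d-1})$. If $\sigma_{d-1}=\sigma_d$, then $\sigma'$ has $c$ sign changes and $p-1$ preservations (so $p\ge 1$); by the inductive hypothesis there is a monic $Q$ of degree $d-1$ with $c$ simple positive and $p-1$ simple negative roots defining $\sigma'$, and I would pass to $(x+\beta)Q(x)$ with $\beta>0$ small. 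Its coefficient of $x^i$ is $q_{i-1}+\beta q_i$, so for $\beta$ small enough the coefficients of $x^d,\dots,x^1$ reproduce the signs of $\sigma'$, the new constant term $\beta q_0$ has the sign of $q_0$, namely $\sigma_{d-1}=\sigma_d$, and $-\beta$ is distinct from the roots of $Q$; hence $(x+\beta)Q$ defines $\sigma$ and realizes $(c,p)$. If instead $\sigma_{d-1}\ne\sigma_d$, then $\sigma'$ has $c-1$ sign changes and $p$ preservations (so $c\ge 1$); take $Q$ realizing $\sigma'$ with $(c-1,p)$ and pass to $(x-\alpha)Q(x)$ with $\alpha>0$ small. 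Now the coefficient of $x^i$ is $q_{i-1}-\alpha q_i$; for $\alpha$ small the coefficients of $x^d,\dots,x^1$ still reproduce $\sigma'$, but the new constant term $-\alpha q_0$ has the sign opposite to $q_0$, i.e. opposite to $\sigma_{d-1}$, hence equal to $\sigma_d$. Since one simple positive root is added, $(x-\alpha)Q$ defines $\sigma$ and realizes $(c,p)$; the induction closes.

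For (1), a sign pattern with $c=1$ is $(+,\dots,+,-,\dots,-)$: for some $m$ with $1\le m\le d$ the coefficients of $x^d,\dots,x^m$ are positive and those of $x^{m-1},\dots,x^0$ negative. I would first realize the pair $(1,d-1)$ in the special form $P_0=(x-x_0)(x+y)^{d-1}$, $x_0,y>0$. The coefficient of $x^{d-i}$ in $P_0$ equals $\binom{d-1}{i}y^i-x_0\binom{d-1}{i-1}y^{i-1}$, which is positive, negative or zero according as $\tfrac{d-i}{i}\,y$ is $>x_0$, $<x_0$ or $=x_0$; since $\tfrac{d-i}{i}\,y$ strictly decreases in $i$, choosing $x_0$ in the nonempty interval $\bigl(\tfrac{m-1}{d-m+1}\,y,\ \tfrac{m}{d-m}\,y\bigr)$ (with $+\infty$ as right endpoint when $m=d$) makes these signs switch from $+$ to $-$ exactly between the coefficients of $x^m$ and $x^{m-1}$. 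Then $P_0$ has no zero coefficient, defines $\sigma$, and has the simple positive root $x_0$ and the negative root $-y$ of multiplicity $d-1$. Finally I would replace $(x+y)^{d-1}$ by $\prod_{i=1}^{d-1-2k}(x+y+\varepsilon_i)\cdot\prod_{j=1}^{k}\bigl((x+y)^2+\delta_j\bigr)$ with the $\varepsilon_i$ small, nonzero and distinct and the $\delta_j$ small, positive and distinct, obtaining a monic degree-$d$ polynomial close to $P_0$ with the simple positive root $x_0$, the $d-1-2k$ simple negative roots $-y-\varepsilon_i$, and $k$ distinct non-real conjugate pairs. Since $P_0$ has only nonzero coefficients, for the perturbation small enough this polynomial still defines $\sigma$, so it realizes $\sigma$ with $(1,d-1-2k)$.

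There is no deep obstacle here: the only real work is in the verifications — that $\beta,\alpha,\varepsilon_i,\delta_j$ can be chosen small enough, that the interval above is nonempty, and the identity $\binom{d-1}{i}/\binom{d-1}{i-1}=\tfrac{d-i}{i}$. The point I would be most careful about is the bookkeeping in the induction for (2): one must multiply by $(x-\alpha)$ precisely when the deleted symbol satisfies $\sigma_d\ne\sigma_{d-1}$ (so a positive root and a trailing sign change are created together) and by $(x+\beta)$ precisely when $\sigma_d=\sigma_{d-1}$ (so a negative root and a trailing sign preservation are created together), keeping $(pos,neg)$ equal to $(c,p)$ at every stage.
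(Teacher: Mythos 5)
Your proof is correct, and the two parts relate differently to the paper's argument. For part (2) you are essentially reproving, in the only case needed, the concatenation lemma that the paper quotes from the literature and applies $d-1$ times with $P_2=x\pm 1$: multiplying a realizing polynomial $Q$ by $(x+\beta)$ or $(x-\alpha)$ with a small root is exactly the lemma's $\varepsilon ^{d_2}P_1(x)P_2(x/\varepsilon )$ specialized to $P_2=x\pm 1$, and your bookkeeping of when to append a negative root (trailing sign preservation) versus a positive root (trailing sign change) matches the paper's; so this half is the same approach with the lemma proved inline rather than cited. For part (1) you take a genuinely different route: the paper first realizes the pair $(1,d-1)$ by the same concatenation process and then reaches $(1,d-1-2k)$ dynamically, perturbing the critical levels and descending in the family $P-t$, $t>0$, so that negative roots disappear two by two; you instead write down $(x-x_0)(x+y)^{d-1}$ in closed form, place the unique sign change using the monotonicity of ${d-1\choose i}/{d-1\choose i-1}=(d-i)/i$, and split the $(d-1)$-fold root directly into $d-1-2k$ real linear factors and $k$ positive-definite quadratic factors. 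Your construction is more explicit and avoids having to check that the shift $P\mapsto P-t$ preserves the sign pattern (which for $c=1$ rests on the constant term being negative, a point the paper leaves implicit); the paper's version is shorter and reuses machinery it needs elsewhere. Both arguments are complete.
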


Proposition~\ref{propc1} is proved in Section~\ref{secprpropc1}. Part (1) of it 
shows that in terms of the value of $c$, the first truly nontrivial case is $c=2$. Its study is the object of the present paper. We should point out that due to the possibility to 
consider instead of the polynomial $P(x)$ the polynomial $P(-x)$ (this 
change exchanges the quantities $c$ and $p$ and the quantities $pos$ 
and $neg$), it suffices to consider (for a given degree $d$) the cases with 
$c\leq [d/2]$. 

\begin{nota}\label{notaPR}
{\rm 
We denote by $\Sigma _{m,n,q}$ 
the SP consisting of $m\geq 1$ pluses followed by $n\geq 1$ minuses followed 
by $q\geq 1$ pluses, where $m+n+q=d+1$. For a given polynomial $P$, we denote by $P^R$ the corresponding  {\em reverted} polynomial, i.e. $P^R:=x^dP(1/x)$. If the polynomial $P$ defines the SP 
$\Sigma _{m,n,q}$, then $P^R$ defines the SP  $\Sigma _{q,n,m}$. The roots of $P^R$ are the reciprocals of the roots of $P$.}
\end{nota}

For small values of $m$ or $n$, we have the following result:

\begin{tm}\label{tmn=1234}
(1) For $n=1$, $d\geq 2$, and for $n=2$, $d\geq 3$, any SP $\Sigma _{m,n,q}$ 
is realizable
with the AP $(0,d-2)$.

(2) For $n=3$ and $d\geq 5$, any SP $\Sigma _{m,3,q}$ is realizable with 
the AP $(0,d-2)$. For
$d=4$, the SP $\Sigma _{1,3,1}$ is not realizable with the AP $(0,2)$.

(3) For $n=4$, the SP $\Sigma _{m,4,q}$ is realizable with the AP 
$(0,d-2)$ if $q\geq 3$, $m\geq 3$
and $d\geq 10$, or if $m=2$ and $q\geq 6$ (hence $d\geq 11$).

(4) For $m=1$ and $n\geq 4$, the SP $\Sigma _{1,n,q}$ 
is not realizable with the AP 
$(0,d-2)$.
\end{tm}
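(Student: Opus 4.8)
The plan is to handle all four parts via polynomials of the form
\[
P(x)=(x^2+ux+v)Q(x),\qquad Q(x)=\prod_{j=1}^{d-2}(x+\beta_j),\quad \beta_j>0 ,
\]
the quadratic factor carrying the two ``missing'' roots. First I record the basic reduction: if $P$ realizes $\Sigma_{m,n,q}$ with the AP $(0,d-2)$, then $pos=0$ forbids positive roots and exactly two roots remain unaccounted for; these two must form a complex conjugate pair, for if $P$ had only real roots they would all be negative, forcing all its coefficients to have one sign — impossible for $\Sigma_{m,n,q}$, which contains a sign change from $+$ to $-$. Hence $v>0$, $u^2<4v$, and the $\beta_j$ are the $d-2$ simple negative roots. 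Writing $b_0,\dots,b_{d-2}$ (with $b_{d-2}=1$) for the positive, log-concave coefficients of $Q$, we have $a_k=b_{k-2}+ub_{k-1}+vb_k$, so each sign requirement on $P$ is linear in $(u,v)$. Putting $w:=-u$ and $t_k:=b_{k-1}/b_k$ (so $0<t_1\le\cdots\le t_{d-2}=e_1$ by Newton's inequalities, with $t_1=1/\sum_j\beta_j^{-1}$ and $t_k=e_{d-1-k}/e_{d-2-k}$, where $e_i:=e_i(\beta_1,\dots,\beta_{d-2})$), this becomes a clean dictionary: $a_{d-1}<0\iff w>e_1$, and for $1\le k\le d-2$, $a_k<0\iff v<t_k(w-t_{k-1})$.

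For parts (1)--(3) I would produce explicit realizing polynomials. Choose the $\beta_j$ so that $Q$ is ``lopsided'' (e.g.\ the $\beta_j$ in fast geometric progression, each $b_i$ then dominated by a single term), take $v$ just above $w^2/4$, and pick $w$ of the magnitude that places the block boundaries of $\Sigma_{m,n,q}$ correctly: raising $w$ makes an upper block of the $a_k$'s negative (the term $ub_{k-1}$ taking over) while the bottom block stays positive (the terms $b_{k-2}$, $vb_k$ dominating), and one verifies the finitely many resulting inequalities among the $e_i$. For $n\le3$ there is enough slack to arrange this for all $m,q\ge1$ (and $d\ge2,3,5$ respectively); for $n=4$ the admissible window shrinks, which is exactly what produces the hypotheses $m,q\ge3$, $d\ge10$ (or $m=2$, $q\ge6$) in (3). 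The non-realizability of $\Sigma_{1,3,1}$ for $d=4$ is the case $q=1$, $d-2=2$ of the argument for part (4).

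For part (4) ($m=1$, $n\ge4$): now $a_{d-1}<0$ forces $w>e_1=t_{d-2}$, so $v>w^2/4>e_1^2/4$. The bottom of the minus-block of $\Sigma_{1,n,q}$ is $a_q$ (since $d-n=q$ when $m=1$), and $a_q<0$ gives $v<t_q(w-t_{q-1})$; combined with $w^2/4<v$ and $w>t_{d-2}$ this forces
\[
t_{d-2}<2t_q+2\sqrt{t_q(t_q-t_{q-1})}\le4t_q .
\]
Since $t_{d-2}=e_1$ and $t_q=e_{n-1}/e_{n-2}$, this says $e_1e_{n-2}<4e_{n-1}$, and (because $t_{d-2}>2t_q$, which follows from the term-by-term bound $e_1e_{n-2}\ge(n-1)e_{n-1}$) even the sharper $(e_1e_{n-2}-2e_{n-1})^2<4(e_{n-1}^2-e_{n-2}e_n)$. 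I would then close the cases. (i) For $q=1$, use $a_1<0$ directly: it gives $v<w/\sum_j\beta_j^{-1}$, hence $w^2/4<w/\sum_j\beta_j^{-1}$ and therefore $\bigl(\sum_j\beta_j\bigr)\bigl(\sum_j\beta_j^{-1}\bigr)<4$, impossible by Cauchy--Schwarz since there are $d-2\ge3$ roots. (ii) For $n\ge5$, the bound $e_1e_{n-2}\ge(n-1)e_{n-1}\ge4e_{n-1}$ already contradicts $e_1e_{n-2}<4e_{n-1}$. (iii) For $n=4$ one must contradict $e_1e_2<4e_3$; an AM--GM counting argument gives $e_1e_2\ge\frac{3(d-2)}{d-4}\,e_3$, which exceeds $4e_3$ once $d\le10$, while for $d\ge11$ one squares the sharper inequality, which (dividing by $e_2>0$) reduces to the assertion $e_1^2e_2+4e_4\ge4e_1e_3$ for all positive $\beta_j$.

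The hard part will be precisely this residual symmetric-function inequality $e_1^2e_2+4e_4\ge4e_1e_3$ (with equality only when all $\beta_j$ coincide, which distinctness of the roots excludes). I expect to prove it by normalizing $e_1=d-2$ and splitting on whether $e_1e_2\ge4e_3$ (where it is immediate from $e_4\ge0$) or $e_1e_2<4e_3$ (where one bounds $e_4$ from below through Newton/Maclaurin inequalities); alternatively a Rolle-type continuity argument reduces it to the case of equal roots, where the slack is a positive multiple of $3(d-2)+6$. Once this inequality and the Cauchy--Schwarz step are in hand the rest of part (4) is bookkeeping, and parts (1)--(3) amount to carrying out the explicit estimates sketched above.
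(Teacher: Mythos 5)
Your framework for part (4) is sound and, after translation, is essentially the paper's: you factor $P=(x^2-wx+v)\prod(x+\beta_j)$, extract linear-in-$(w,v)$ sign conditions, and combine $v>w^2/4$ with two of them to land (for $n=4$) on exactly the inequality $e_1^2e_2+4e_4<4e_1e_3$ that must be contradicted. (The paper reads off the \emph{top} four coefficients of the minus block, you read off the top one and the bottom one; for $m=1$, $n=4$ these give the same thing.) Your $n\ge 5$ shortcut via $e_1e_{n-2}\ge (n-1)e_{n-1}$ and your $q=1$ Cauchy--Schwarz argument are correct and pleasantly direct --- the latter even recovers the nonrealizability of $(\Sigma_{1,3,1},(0,2))$ for $d=4$, which the paper only cites from Grabiner. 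But the step you yourself flag as ``the hard part,'' namely $e_1^2e_2+4e_4\ge 4e_1e_3$ for positive $\beta_j$, is left unproven: you name two candidate strategies without executing either, and neither is routine (the case split on $e_1e_2\gtrless 4e_3$ does not obviously close the second branch, and the ``reduce to equal roots'' claim needs an actual variational argument). The paper proves this inequality by induction on the number of roots: adding a root $a_{d-1}$ reduces the claim to $a_{d-1}^3e_1+2a_{d-1}^2e_1^2+a_{d-1}e_1^3>3a_{d-1}^2e_2+2a_{d-1}e_1e_2$, which follows from Newton's inequality $e_1^2\ge\frac{2N}{N-1}e_2$, with base case $e_1e_2\ge 9e_3$ for three variables. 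Until you supply a proof of this inequality, part (4) is not done.

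The larger gap is in parts (1)--(3), which are realizability statements and therefore need actual constructions. You propose taking the $\beta_j$ in fast geometric progression and ``verifying the finitely many resulting inequalities,'' but you verify none of them, and --- more seriously --- there are infinitely many triples $(m,n,q)$ to cover, so a per-triple verification is not a proof without a uniform mechanism. The thresholds in part (3) ($m,q\ge3$, $d\ge10$, or $m=2$, $q\ge 6$) are not derived from your method; you assert that the shrinking admissible window ``is exactly what produces'' them, which is circular. The paper's mechanism is different and is the essential missing idea here: exhibit \emph{one} explicit seed polynomial per value of $n$ (e.g.\ $(x+1)^8(x^2-2.49x+1.56)$ realizing $\Sigma_{3,4,4}$ for $d=10$ and $(x+1)^9(x^2-4.69x+5.5)$ realizing $\Sigma_{2,4,6}$ for $d=11$), then propagate to all admissible $(m,q)$ by the concatenation lemma (multiplying by $x+1$ after rescaling, which appends a $+$ and adds a negative root) together with reversion $P\mapsto x^dP(1/x)$, which swaps $m$ and $q$. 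Some inductive device of this kind --- or a genuinely uniform family of explicit polynomials with verified coefficient signs --- is required; without it parts (1)--(3) remain a plausible plan rather than a proof.
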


Theorem~\ref{tmn=1234} is proved in Section~\ref{secprtmn=1234}. 

\begin{rems}\label{remskappa}
{\rm (1) If a SP $\Sigma _{m,n,q}$ 
is realizable with the AP $(0,d-2)$, then it is realizable with 
any AP of the form $(0,d-2k)$, $k=1$, $\ldots$, $[(d-2)/2]$. 
Indeed, if a polynomial $P$ with distinct nonzero roots realizes 
the SP $\Sigma _{m,n,q}$, then one can perturb $P$ to make 
all its critical levels distinct. In the family $P+t$ 
one encounters (for suitable positive values of $t$) 
polynomials with exactly $d-2k$ distinct negative 
roots and no positive ones, for $k=1$, $\ldots$, $[(d-2)/2]$. 
As $t\geq 0$, the constant term of the polynomial $P$ is positive 
hence $P+t$ defines the SP $\Sigma _{m,n,q}$. 

(2) The exhaustive answer to the question which couples (SP, AP) 
are realizable for $d\leq 8$ is given in \cite{Gr},  \cite{AlFu}, \cite{FoKoSh} 
and \cite{KoCzMJ}. From the results in these papers one deduces that for $5\leq d\leq 8$, the SP $\Sigma _{m,4,q}$ is not realizable with the AP $(0,d-2)$. For $d\geq 9$, $n\leq 4$ and $c=2$, 
the only cases when the 
AP is $(0,d-2)$ and 
which are not covered by Theorem~\ref{tmn=1234} are the ones of 
$\Sigma _{3,4,3}$ and $\Sigma _{2,4,4}$ for $d=9$ and 
of $\Sigma _{2,4,5}$ for $d=10$. These cases are settled by Proposition~\ref{propthreecases}.

(3) The following result is proved in \cite{FoKoSh} (see Proposition~6 therein): {\em If $\kappa :=((d-m-1)/m)((d-q-1)/q)\geq 4$, then the SP $\Sigma _{m,n,q}$ is not realizable with the AP $(0,d-2)$.} This seems to be the only result concerning nonrealizability of the couple ($\Sigma _{m,n,q}$, $(0,d-2)$) known up to now. Part (4) of Theorem~\ref{tmn=1234} implies nonrealizability of cases which are not covered by the cited result. These are $\Sigma _{1,4,d-4}$ for $d\geq 11$ (with $\kappa =3(d-2)/(d-4)$ which is $\leq 27/7<4$ for $d\geq 11$). }
\end{rems}

\begin{prop}\label{propthreecases}
(1) For $d=9$, the SPs $\Sigma _{3,4,3}$ and $\Sigma _{2,4,4}$ are not realizable with 
the AP $(0,7)$.

(2) For $d=10$, the SP $\Sigma _{2,4,5}$ is not realizable with the AP $(0,8)$.
\end{prop}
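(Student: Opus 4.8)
The plan is to argue by contradiction and to turn a hypothetical realization into an over-determined system of inequalities among the elementary symmetric functions of its negative roots. Suppose $P$, monic of degree $d\in\{9,10\}$, realizes one of the three patterns with the pair $(0,d-2)$. Since $pos=0$, one has $P(x)>0$ for all $x\ge 0$. The $d-2$ distinct negative roots carry total multiplicity $d-2$, $d-1$ or $d$; the value $d-1$ is excluded (the single remaining root would be real, nonzero and non-positive, hence a further distinct negative root), and the value $d$ is excluded (then $P$ has only positive coefficients, contradicting $c=2$). Hence the negative roots are simple and $P=(x^2+\beta x+\gamma)\prod_{i=1}^{d-2}(x+r_i)$ with the $r_i>0$ distinct and $\beta^2<4\gamma$ (so $\gamma>0$); conversely any such product has $pos=0$, so everything reduces to the compatibility of this factorisation with the prescribed signs of the coefficients of $P$. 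Replacing $P$ by the reverted polynomial $P^R$ (Notation~\ref{notaPR}) if needed, I may assume $m\le q$, which leaves precisely the triples $(3,4,3)$, $(2,4,4)$ for $d=9$ and $(2,4,5)$ for $d=10$.

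Next I would translate the sign conditions. Put $R(x):=\prod_{i=1}^{d-2}(x+r_i)=\sum_{k=0}^{d-2}R_kx^k$, so each $R_k=e_{d-2-k}(r_1,\dots,r_{d-2})>0$, and set $\rho_k:=R_k/R_{k+1}$. Newton's inequalities for the real-rooted $R$ (strict, since the $r_i$ are distinct) give $\rho_k/\rho_{k-1}\ge\frac{(k+1)(d-1-k)}{k(d-2-k)}>1$ for $1\le k\le d-3$. The coefficient of $x^j$ in $P$ equals $R_{j-2}+\beta R_{j-1}+\gamma R_j$; imposing the signs of $\Sigma_{m,4,q}$ forces $\beta=-b$ with $b>0$, and, writing $\phi(j):=\rho_{j-2}+\gamma/\rho_{j-1}=(R_{j-2}+\gamma R_j)/R_{j-1}$ (with the convention $R_{-1}=R_{d-1}=0$, so that the two extreme conditions are vacuous), realizability becomes: there exist $\gamma>0$ and $b$ with
\[ \max_{q\le j\le q+3}\phi(j)\ <\ b\ <\ \min\bigl\{\,2\sqrt\gamma,\ \phi(q-1),\ \phi(q+4),\ \dots\,\bigr\}, \]
where the inner minimum ranges over the non-vacuous indices of the two $+$-blocks adjacent to the $-$-block, and the term $2\sqrt\gamma$ encodes $\beta^2<4\gamma$.

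The core step is to show this system has no solution. The two outermost middle conditions together with $b<2\sqrt\gamma$ confine $\sqrt\gamma$ simultaneously to the interval $\bigl(\rho_{q-1}(1-\sqrt{1-\rho_{q-2}/\rho_{q-1}}),\ \rho_{q-1}(1+\sqrt{1-\rho_{q-2}/\rho_{q-1}})\bigr)$ and to $\bigl(\rho_{q+2}(1-\sqrt{1-\rho_{q+1}/\rho_{q+2}}),\ \rho_{q+2}(1+\sqrt{1-\rho_{q+1}/\rho_{q+2}})\bigr)$; the Newton lower bounds for $\rho_q/\rho_{q-1}$, $\rho_{q+1}/\rho_q$, $\rho_{q+2}/\rho_{q+1}$ — supplemented, when these alone do not suffice, by the inner middle conditions ($j=q+1,q+2$) and by the outer ones $\phi(q)<\phi(q-1)$ and $\phi(q+3)<\phi(q+4)$, which sandwich $\gamma$ between $\frac{(\rho_{q-2}-\rho_{q-3})\rho_{q-1}\rho_{q-2}}{\rho_{q-1}-\rho_{q-2}}$ and $\frac{(\rho_{q+2}-\rho_{q+1})\rho_{q+2}\rho_{q+3}}{\rho_{q+3}-\rho_{q+2}}$ — should force the two intervals apart. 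For each of the three triples this comes down to a finite list of explicit numerical inequalities, which I would close by direct computation. For $d=10$ there is also a structural shortcut: $P'$ has sign pattern $\Sigma_{2,4,4}$ and, by Rolle, at least $7$ negative roots, so part~(2) would follow from part~(1) once one verifies $pos(P')=0$ — itself an instance of the same analysis.

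The hard part will be this last step. The estimate behind the quantity $\kappa$ of Remark~\ref{remskappa}(3) is exactly the AM--GM bound $R_{j-2}+\gamma R_j\ge 2\sqrt{\gamma R_{j-2}R_j}$ applied to the middle coefficients; for $n=4$ and $d\in\{9,10\}$ this falls just short of a contradiction ($\kappa<4$), and recovering one requires both the sharper input $\beta^2<4\gamma$ and the exact Newton constants. Moreover, since Newton's inequalities bound the consecutive ratios $\rho_{k+1}/\rho_k$ only from below, I expect that in the $m=2$ cases one genuinely must feed in the outer sign conditions to control these ratios from above as well; keeping this finite bookkeeping straight — including the edge terms $R_{-1}=R_{d-1}=0$, which leave the outermost positive coefficients unconstrained — is the delicate point.
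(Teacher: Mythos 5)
Your setup (reduction to $P=(x^2-bx+\gamma)R$ with $b>0$, $b^2<4\gamma$, $R$ a product of $d-2$ distinct negative linear factors, and the reformulation of realizability as the consistency of the system $\max_{q\le j\le q+3}\phi(j)<b<\min\{2\sqrt{\gamma},\phi(q-1),\phi(q+4)\}$) is correct and is essentially the paper's starting point; the paper additionally reduces, by a monotonicity trick ($P=R\cdot(x-a)^2+bR$ with $R$ having positive coefficients), to the case where the quadratic factor is a perfect square $(x-a)^2$. But the core step --- showing the system has no solution --- is where your proposal fails, and it fails for a structural reason, not merely because the computation is left undone. The only input you allow from the real-rootedness of $R$ is Newton's inequalities, i.e.\ lower bounds on the ratios $\rho_k/\rho_{k-1}$. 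That relaxation is \emph{feasible}. Take $d=9$, $\Sigma_{3,4,3}$, and set $R_k=\binom{7}{k}$ for $k\ge 2$, $R_1=7-\eta$ for small $\eta>0$, $R_0=R_1^2/49$. Every Newton inequality $R_k^2\ge\bigl(\binom{7}{k}^2/\binom{7}{k-1}\binom{7}{k+1}\bigr)R_{k-1}R_{k+1}$ holds (strictly at $k=2$, and an arbitrarily small further perturbation makes all of them strict). The two binding intervals for $\sqrt{\gamma}$ are those for $j=3$ and $j=6$; for $\eta=0$ they are $(1/5,1)$ and $(1,5)$ and just touch (this is exactly the borderline $\mathcal{L}(9,3,4)=0$ of Theorem~\ref{Real}), but lowering $R_1$ increases $1-\rho_1/\rho_2$ and pushes the right endpoint of the first interval strictly above $1$, so $\sqrt{\gamma}$ can be chosen in the overlap; the $j=4,5$ intervals and the outer conditions $b<\phi(2)$, $b<\phi(7)$ are then satisfied with room to spare. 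Hence no finite list of inequalities drawn from Newton's inequalities together with the sign conditions can be contradictory, and the ``direct computation'' you defer to cannot exist. The point is that Newton's inequalities are necessary but very far from sufficient for $(R_k)$ to be the coefficient vector of a real-rooted polynomial, and the obstruction to realizability lives precisely in that gap. (Your Rolle shortcut for part (2) has the separate, acknowledged hole that $pos(P')=2$ is not excluded, so it does not reduce part (2) to part (1).)

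The paper's proof is built to access the full strength of real-rootedness rather than a symmetric-function relaxation: it works directly in root space, establishes compactness of the relevant parameter set, proves a variational lemma (Lemma~\ref{lmnotfour}) showing that the minimum of $p_\mu+p_\nu$ cannot occur when three or more of the root moduli $u_j$ are distinct from each other and from $1$, and thereby reduces to the finitely many configurations $R=(x+1)^{s_1}(x+v)^{s_2}(x+w)^{s_3}$, which are then eliminated by explicit resultant computations $\mathrm{Res}(p_\mu,p_\nu,a)$ in the variables $V=v-1$, $W=w-1$. Some reduction of this kind --- replacing ``$R$ real-rooted'' by an exhaustible family of extremal configurations --- is the missing idea in your proposal; without it the argument cannot close, because the cases at hand sit exactly on the boundary $\mathcal{L}(d,m,n)=0$ where every inequality you invoke degenerates to an equality.
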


Proposition~\ref{propthreecases} is proved in Section~\ref{secprpropthreecases}. 
Our next result 
contains sufficient conditions for realizability of a SP 
$\Sigma _{m,n,q}$ with the AP $(0,d-2)$:

\begin{tm}\label{Real}
The SP $\Sigma _{m,n,q}$ is realizable with 
the AP $(0,d-2)$ if 

\begin{equation}\label{eqsuffcond}\mathcal{L}(d,m,n):= -dn^2+4dm+4dn-4m^2-4mn-4d+4m>0~.  \end{equation}
\end{tm}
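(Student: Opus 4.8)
The plan is to realize the sign pattern $\Sigma_{m,n,q}$ with the pair $(0,d-2)$ by an explicit construction: I would build a monic polynomial $P$ with exactly $d-2$ distinct negative roots, no positive roots, and exactly one complex-conjugate pair of roots $\rho e^{\pm i\theta}$ (with $\rho>0$), and then arrange the parameters so that the coefficient vector has precisely the sign changes prescribed by $\Sigma_{m,n,q}$ — namely a block of $m$ pluses, then $n$ minuses, then $q$ pluses. The natural ansatz is
\begin{equation}\label{eqansatz}
P(x)=(x^2-2\rho\cos\theta\, x+\rho^2)\prod_{j=1}^{d-2}(x+b_j),\qquad b_j>0,
\end{equation}
and then to push the $d-2$ negative roots $-b_j$ far out to $-\infty$ (say $b_j=B\beta_j$ with $B\to+\infty$, $\beta_j>0$ fixed and distinct), so that for large $B$ the coefficients of $P$ are dominated by the contribution of $\prod(x+b_j)$ except near the ``top'' of the polynomial, where the quadratic factor still matters. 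Concretely, writing $\prod(x+b_j)=\sum_k e_k x^{d-2-k}$ with $e_k>0$ the elementary symmetric functions, the coefficient of $x^{d-j}$ in $P$ is $e_j-2\rho\cos\theta\, e_{j-1}+\rho^2 e_{j-2}$; scaling $b_j=B\beta_j$ makes $e_k\sim B^k$, so the quadratic perturbation is a relative-order-$1/B$ correction to the bulk coefficients but governs the top three coefficients $1,\;-2\rho\cos\theta,\;\rho^2$ completely.

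The key steps, in order: (i) Fix the desired sign pattern. For $m\ge 2$ and $q\ge 2$ the ``interface'' sign changes occur at two places; I would realize the first minus-block by choosing $\rho\cos\theta>0$ large enough that the first few coefficients below the leading $1$ turn negative, and realize the return to pluses further down by making $\rho^2$ (hence $\rho$) large enough relative to the $e_k$'s that $\rho^2 e_{j-2}$ eventually dominates $2\rho\cos\theta\, e_{j-1}$. (ii) Translate the sign requirements on the $d+1$ coefficients into inequalities among $B$, $\rho$, $\theta$ and the fixed $\beta_j$. After the scaling the dominant terms are monomials in $B$, and the condition that the sign changes fall in exactly the right positions becomes a system of inequalities of the shape ``$\rho^2 e_{j-2} > 2\rho\cos\theta\, e_{j-1}$ for $j$ in the $q$-block'' and ``$2\rho\cos\theta\, e_{j-1}>e_j+\rho^2 e_{j-2}$ for $j$ in the $n$-block.'' (iii) Show the feasible region is nonempty precisely when $\mathcal L(d,m,n)>0$; this is where the quadratic form in \eqref{eqsuffcond} enters — it should emerge as (a multiple of) the discriminant-type condition for the simultaneous solvability of the two competing inequalities at the two interfaces, i.e. the condition that one can choose $\rho$ large for the $q$-block while $\rho\cos\theta$ is simultaneously large enough for the $n$-block but not so large as to kill the $q$-block. (iv) Finally, once sign control is achieved, invoke Remark~\ref{remskappa}(1) is unnecessary here since we aim directly at $(0,d-2)$; but I would still check that the constructed $P$ indeed has $d-2$ distinct negative real roots (immediate from \eqref{eqansatz}) and a genuinely non-real pair (i.e. $\theta\notin\{0,\pi\}$, which the inequalities will force) and no positive root (clear since all real factors are $x+b_j$ with $b_j>0$ and the quadratic has no real root).

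The main obstacle I anticipate is step (iii): extracting the clean polynomial inequality $\mathcal L(d,m,n)>0$ from the asymptotic analysis. One must make the right choice of how $\rho$ and $\rho\cos\theta$ scale with $B$ (plausibly $\rho\sim B^{a}$, $\rho\cos\theta\sim B^{b}$ for suitable rational $a,b$ depending on where the interfaces $m$ and $m+n$ sit), track which monomial in $B$ wins in each of the $d+1$ coefficients, and verify that the ``worst'' coefficient — the one hardest to give the correct sign — imposes exactly the stated quadratic constraint while all others are then automatically satisfied. A secondary technical point is handling the boundary positions $j=m$ and $j=m+n$ of the blocks, where two dominant monomials in $B$ have the same degree and one must compare their (fixed) coefficients built from the $\beta_j$; here one has the freedom to choose the $\beta_j$ (e.g. all equal, or in geometric progression) to make these comparisons come out right, and I expect that with $\beta_j$ all equal the elementary symmetric functions become binomial coefficients and the whole system reduces to the single inequality \eqref{eqsuffcond}. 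Degenerate small-block cases ($m=1$ or $n$ small) are already covered by Theorem~\ref{tmn=1234}, so it suffices to treat $m\ge 2$, $q\ge 2$ and $n\ge 3$, where the asymptotic picture above is cleanest.
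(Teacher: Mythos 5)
Your ansatz is essentially the paper's: after your own suggested simplification (all $\beta_j$ equal) and the rescaling $x\mapsto Bx$, your polynomial becomes exactly $(x+1)^{d-2}(x^2-zx+y)$ with $z=2\rho\cos\theta/B$, $y=\rho^2/B^2$, which is precisely the two-parameter family the paper analyzes. In particular the limit $B\to\infty$ buys you nothing -- the parameter $B$ scales out completely, and there is no asymptotic dominance to exploit; what remains is an exact two-parameter feasibility problem in $(z,y)$, not an asymptotic one. This matters because the entire content of the theorem lives in the step you explicitly defer as ``the main obstacle I anticipate'': showing that the system of sign conditions on the coefficients $p_j=\binom{d-2}{j}-\binom{d-2}{j-1}z+\binom{d-2}{j-2}y$ is solvable exactly when $\mathcal{L}(d,m,n)>0$. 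Your proposal does not contain the ideas needed to carry this out, so as it stands it is a plan rather than a proof.

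Concretely, the missing argument runs as follows. Using $z<2\sqrt{y}$, each negativity requirement $p_j<0$ ($j=m,\dots,m+n-1$) forces $\binom{d-2}{j}-2\binom{d-2}{j-1}\sqrt{y}+\binom{d-2}{j-2}y<0$, i.e.\ $\sqrt{y}$ must lie strictly between the two roots $Q^{\pm}(j-1)$ of this quadratic (real and distinct by Newton's inequalities for binomial coefficients). One then needs the key monotonicity fact that both $Q^{+}(k)$ and $Q^{-}(k)$ decrease in $k$, so that the intersection of the $n$ intervals is nonempty iff $Q^{-}(m-1)<Q^{+}(m+n-2)$; a final algebraic manipulation (squaring twice, with a sign check on $a-f$) converts this inequality into $\mathcal{L}(d,m,n)>0$. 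None of this is guessable from ``track which monomial in $B$ wins,'' and your fallback of comparing fixed coefficients at the block boundaries $j=m$, $j=m+n$ does not address the interior coefficients of the $n$-block, which are exactly the ones producing the nested-interval condition. A secondary gap: you restrict to $m\ge2$, $q\ge2$, $n\ge3$ and wave at Theorem~\ref{tmn=1234} for the rest, but you would still need to verify that every triple with $\mathcal{L}>0$ in the excluded range is actually covered there.
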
 

Theorem~\ref{Real} is proved in Section~\ref{secprReal}. 

\begin{rems}{\rm (1) Condition (\ref{eqsuffcond}) is sharp in the following sense: in the two nonrealizable cases ($\Sigma _{3,4,3}$, $(0,7)$) and ($\Sigma _{2,4,5}$, $(0,8)$) (see Proposition~\ref{propthreecases}) one has $\mathcal{L}(d,m,n)=0$. 

(2) The condition of realizability (\ref{eqsuffcond}) can be compared with the condition of nonrealizability $\kappa \geq 4$ (see part (3) of Remarks~\ref{remskappa}). To this end the latter can be given the following equivalent form:}

$$3dm-dn-3m^2-3mn+2d+3m+n-2\leq 0~.$$

{\rm (3) As the SPs $\Sigma _{m,n,q}$ and $\Sigma _{q,n,m}$ are simultaneously (non)realizable with the AP $(0,d-2)$ (see the definition of $P^R$ in Notation~\ref{notaPR}), one can assume that $m\leq q$ hence $m\leq [d/2]$. Condition (\ref{eqsuffcond}) can be presented in the form 

$$\mathcal{M}d-\mathcal{N}m>0~~~\, {\rm with}~~~\, \mathcal{M}:=4m+4n-n^2-4~~~\, {\rm and}~~~\, \mathcal{N}:=4m+4n-4$$
which allows, for given $n=n_0$, to find $m_0$ such that for $m\geq m_0$, one has $\mathcal{N}/\mathcal{M}<2$. Then for $m\geq m_0$ and $d\geq 2m+n_0$, condition (\ref{eqsuffcond}) is fulfilled and the corresponding SP is realizable with the AP $(0,d-2)$.}
\end{rems}

\section{Proof of Proposition~\protect\ref{propc1}\protect\label{secprpropc1}}

Part (1). We remind the formulation of 
a {\em concatenation lemma} (see~\cite{FoKoSh}):

\begin{lm}\label{lmconcat}
Suppose that the
monic polynomials $P_1$ and $P_2$ of degrees $d_1$ and $d_2$ with SPs
$(+,\sigma _1)$ and $(+,\sigma _2)$ respectively realize
the pairs $(pos_1, neg_1)$ and $(pos_2, neg_2)$. Here $\sigma _j$
denote what remains of the SPs when the initial sign $+$ is deleted.
Then

(1) if the last position of $\sigma _1$ is $+$, then for any $\varepsilon >0$
small enough, the polynomial $\varepsilon ^{d_2}P_1(x)P_2(x/\varepsilon )$
realizes the SP $(1,\sigma _1,\sigma _2)$ and the pair
$(pos_1+pos_2, neg_1+neg_2)$;

(2) if the last position of $\sigma _1$ is $-$, then for any $\varepsilon >0$
small enough, the polynomial $\varepsilon ^{d_2}P_1(x)P_2(x/\varepsilon )$
realizes the SP $(1,\sigma _1,-\sigma _2)$ and the pair
$(pos_1+pos_2, neg_1+neg_2)$. Here $-\sigma _2$ is obtained from $\sigma _2$
by changing each $+$ by $-$ and vice versa.
\end{lm}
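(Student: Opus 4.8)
The plan is to prove the concatenation lemma by a direct perturbation argument, tracking what happens to the coefficients and to the roots of the product $\varepsilon^{d_2}P_1(x)P_2(x/\varepsilon)$ as $\varepsilon\to 0^+$. First I would observe that scaling the variable in $P_2$ by $1/\varepsilon$ and multiplying by $\varepsilon^{d_2}$ produces a monic polynomial of degree $d_1+d_2$: if $P_2(y)=\sum_{i=0}^{d_2}b_i y^i$ with $b_{d_2}=1$, then $\varepsilon^{d_2}P_2(x/\varepsilon)=\sum_{i=0}^{d_2}b_i\varepsilon^{d_2-i}x^i$, which is monic of degree $d_2$ and whose coefficient of $x^i$ has the same sign as $b_i$. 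Hence $P_2$ and its rescaled version define the same SP, and the roots of the rescaled polynomial are $\varepsilon$ times the roots of $P_2$ — in particular the signs (and reality) of the roots are preserved, so the rescaled polynomial still realizes $(pos_2,neg_2)$.

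Next I would examine the coefficients of the product $Q_\varepsilon(x):=P_1(x)\cdot\bigl(\varepsilon^{d_2}P_2(x/\varepsilon)\bigr)$. Writing $P_1(x)=\sum_{j=0}^{d_1}a_j x^j$ with $a_{d_1}=1$, the coefficient of $x^k$ in $Q_\varepsilon$ is $\sum_{j} a_j b_{k-j}\varepsilon^{d_2-(k-j)}$. The key point is that as $\varepsilon\to 0^+$, the term with the \emph{smallest} power of $\varepsilon$ dominates; for a coefficient of degree $k$ in the "high" range this is the term coming from the top coefficients of $P_2$, and for degree $k$ in the "low" range it is the term coming from the bottom coefficients of $P_1$. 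More precisely, for $k\ge d_2$ the dominant term is $a_{k-d_2}b_{d_2}\varepsilon^{0}=a_{k-d_2}$, giving sign $\operatorname{sgn}(a_{k-d_2})$, while for $k\le d_2$ the dominant term is $a_0 b_k\varepsilon^{d_2-k}$, giving sign $\operatorname{sgn}(a_0)\operatorname{sgn}(b_k)$. Since $a_0$ is the constant term of $P_1$, whose sign is the last entry of $\sigma_1$, this is exactly where the dichotomy between cases (1) and (2) enters: if that last entry is $+$ the low-degree signs of $Q_\varepsilon$ reproduce $\sigma_2$, and if it is $-$ they reproduce $-\sigma_2$. Checking the overlap at $k=d_2$ for consistency (the two descriptions agree there) and noting that none of the dominant coefficients vanish (all $a_j,b_i\ne 0$), one concludes that for $\varepsilon$ small enough $Q_\varepsilon$ defines precisely the SP $(1,\sigma_1,\sigma_2)$ or $(1,\sigma_1,-\sigma_2)$ as claimed.

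Finally, the count of roots: the roots of $Q_\varepsilon$ are the union of the roots of $P_1$ and the roots of the rescaled $P_2$ (the latter being $\varepsilon$-scaled roots of $P_2$), so $Q_\varepsilon$ has exactly $pos_1+pos_2$ positive roots and $neg_1+neg_2$ negative roots, all distinct for $\varepsilon$ small enough since the $\varepsilon$-scaled roots of $P_2$ cluster near $0$ and hence separate from the fixed nonzero roots of $P_1$. I expect the main technical obstacle to be the careful bookkeeping of which monomial in the double sum for each coefficient has the minimal $\varepsilon$-exponent, and verifying that the strict sign of each coefficient of $Q_\varepsilon$ is governed by that single dominant monomial uniformly in $k$ once $\varepsilon$ is below a threshold depending only on the finitely many fixed coefficients $a_j,b_i$ — this is elementary but needs to be stated cleanly to make the "for $\varepsilon$ small enough" uniform across all $d_1+d_2+1$ coefficients simultaneously.
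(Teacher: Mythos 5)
Your proof is correct and complete. Note that the paper itself gives no proof of this lemma --- it is quoted verbatim from the reference [FoKoSh] --- so there is nothing to compare against line by line; your argument (identifying, for each coefficient of $\varepsilon^{d_2}P_1(x)P_2(x/\varepsilon)$, the unique monomial with minimal $\varepsilon$-exponent, namely $a_{k-d_2}$ for $k\geq d_2$ and $a_0b_k\varepsilon^{d_2-k}$ for $k\leq d_2$, checking consistency at $k=d_2$ via $b_{d_2}=1$, and separating the $\varepsilon$-scaled roots of $P_2$ from the fixed nonzero roots of $P_1$) is exactly the standard argument by which this concatenation lemma is established, and every step, including the uniformity of the threshold over the finitely many coefficients and the role of $\operatorname{sgn}(a_0)$ in distinguishing cases (1) and (2), is sound.
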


For $d=1$, the SP $(+,+)$ (resp. $(+,-)$) is realizable with the AP $(0,1)$ 
(resp. $(1,0)$) by the polynomial $x+1$ (resp. $x-1$). Applying 
Lemma~\ref{lmconcat} with $P_1$ and $P_2$ of the form $x\pm 1$ one realizes 
for $d=2$ all the three SPs with $c=0$ or $c=1$ with the APs of the form 
$(0,2)$ or $(1,1)$. 

Suppose that for $d=d_0\geq 2$ all SPs with $c=0$ or 
$c=1$ are realizable by monic polynomials (denoted by $P$). Then to realize 
for $d=d_0+1$ a SP with $c=0$ or $c=1$ with the pair $(0,d_0+1)$ or $(1,d_0)$ 
it suffices to apply Lemma~\ref{lmconcat} 
with $P_1=P$ and with $P_2=x-1$ (resp. $P_2=x+1$) if $c=0$ 
and the last two signs of the SP defined by 
$P$ are $(+,-)$ (resp. if $c=1$ and these last two signs are $(-,-)$).

To realize for $c=1$ a SP with any AP 
$(1,d-1-2k)$,
$k\leq [(d-1)/2]$, it suffices to perturb a polynomial $P$ realizing this SP 
with the pair $(1,d-1)$ so that all critical levels become distinct and then 
choose suitable values of $t>0$ in the family of polynomials $P-t$. 

To prove part (2) one has to apply $d-1$ times Lemma~\ref{lmconcat}. When it is applied for the first time one sets $P_1:=x-1$ (resp. $P_1:=x+1$) if the second entry of the SP is $-$ (resp. $+$). Each time the polynomial $P_2$ equals $x-1$ or $x+1$.

\section{Proof of Theorem~\protect\ref{tmn=1234}\protect\label{secprtmn=1234}}

Part (1). For $d=2$ and $d=3$, the polynomials

$$(x-1)^2+1=x^2-2x+2~~~\, ,~~~\, (x+2)((x-2)^2+2)=x^3-2x^2-2x+12$$
realize the APs $(0,0)$ and $(0,1)$ with the SPs $\Sigma _{1,1,1}$ and $\Sigma _{1,2,1}$ respectively. If a polynomial $P$ realizes a SP $\Sigma 
_{m,n,q}$ (with
$n=1$ or $2$) with the AP $(0,d-2)$, then the concatenation $Q$ of $P$ 
with $x+1$
realizes the SP $\Sigma _{m,n,q+1}$ with the AP $(0,d-1)$, and the 
polynomial $Q^R:=x^dQ(1/x)$ (the  reverted of $Q$) 
realizes the SP $\Sigma _{q+1,n,m}$ with the AP $(0,d-1)$. Thus by means 
of concatenation and
reversion one can realize all SPs $\Sigma _{m,1,q}$ and $\Sigma 
_{m,2,q}$ with the AP
$(0,d-2)$.

Part (2). For $d=4$, the nonrealizability of the SP $\Sigma _{1,3,1}$ with 
the AP $(0,2)$ is
proved in \cite{Gr}. For $d=5$, the SP $\Sigma _{1,3,2}$ is 
realizable with the AP
$(0,3)$, see \cite{AlFu}. To prove the first claim of part (2) one 
has to combine
concatenation and reversion as in the proof of part~(1) (applied to $\Sigma _{1,3,2}$).

Part (3). For $d=10$, the polynomial

$$\begin{array}{l}
(x+1)^8(x^2-2.49x+1.56)~=\\ \\ 
x^{10}+5.51x^9+9.64x^8-1.24x^7-25.76x^6-30.94x^5\\ \\ 
-2.24x^4+25.64x^3+24.76x^2+9.99x+1.56
 \end{array}$$
defines the SP $\Sigma _{3,4,4}$. (The quadratic factor is without real 
roots.)
One can perturb its $8$-fold root at $-1$ so that
the latter splits into $8$ negative simple roots. Thus the perturbation 
realizes
this SP with the AP $(0,8)$. Similarly, for $d=11$, a suitable 
perturbation of the
polynomial

$$\begin{array}{l}
(x+1)^9(x^2-4.69x+5.5)~=\\ \\
x^{11}+4.31x^{10}-0.71x^9-35.34x^8-69.96x^7-2.94x^6 \\ \\
+186.06x^5+335.04x^4+302.16x^3+156.79x^2+44.81x+5.5
 \end{array}$$
realizes the SP $\Sigma _{2,4,6}$ with the AP $(0,9)$. As in the proof 
of parts
(1) and (2), one deduces the realizability of all SPs as claimed by part 
(3) by
applying concatenation and reversion.

Part (4). Suppose that the SP $\Sigma _{1,n,q}$ with $n \ge 4$ 
is realizable by the polynomial 

$$P(x)= (x^{d-2}+e_1 x^{d-3}+\cdots +e_{d-2}) (x^2-zx+y)   $$
where $d\geq 5$, $z^2<4y$ and $e_j>0$ is the $j$th elementary symmetric function 
of the moduli $a_j$ of the negative roots of $P$.
(As $y>0$, the coefficient of $x$ of the quadratic 
factor must be negative, 
otherwise all coefficients of $P$ will be positive, so $z>0$).
Thus one obtains the conditions

$$
\begin{array}{lccccrccccc}z^2&<&4y&&&&&&&&,\\e_1-z&<&0&,&~~~{\rm i.~e.}\hspace{5mm}&
z&>&e_1&>&0&, \\e_2-e_1z+y&<&0&,&~~~{\rm i.~e.}\hspace{5mm}&
e_1z&>&y+e_2&&&,\\e_3-e_2z+e_1y&<&0&,&~~~{\rm i.~e.}\hspace{5mm}&
e_2z&>&e_3+e_1y&&&{\rm and}\\e_4-e_3z+e_2y&<&0&,&~~~{\rm i.~e.}\hspace{5mm}&
e_3z&>&e_4+e_2y&>&0&.\end{array}
$$
Keeping in mind that $e_j>0$, $y>0$ and $z>0$, one gets 

$$\begin{array}{ccccccccccc}z&>&(e_4+e_2y)/e_3&~~~\, {\rm and}~~~\,& z&<&2\sqrt{y}&&,&{\rm i.~e.}\\ \\ 
2e_3\sqrt{y}&>&e_4+e_2y &~~~\, {\rm and}~~~\,&
T(\sqrt{y})&:=&e_2y-2e_3\sqrt{y}+e_4&<&0&.&\end{array}$$ 
The quadratic polynomial $T$ has a positive discriminant $e_3^2-e_2e_4$ (this follows from Newton's inequalities). Hence it has two real roots, so the last 
inequality implies 
$$\sqrt{y}<\frac{e_3+\sqrt{e_3^2-e_2e_4}}{e_2}$$ 
and as $\displaystyle{\sqrt{y}>\frac{z}{2}>\frac{e_1}{2}}$, 
one deduces the condition 

\begin{equation}\label{ineq*} e_1e_2-2e_3<\sqrt{4e_3^2-4e_2e_4}~.  \end{equation}
One has $e_1e_2-2e_3>0$. Indeed, every product $a_ia_ja_k$ is encountered exactly two times in $2e_3$ and three times in $e_1e_2$ (and there are also the products $a_i^2a_j$ in $e_1e_2$). Hence one can take squares of both hand-sides of inequality $(\ref{ineq*})$ and then divide by $e_2$ to obtain the condition 
\begin{equation}\label{ineq**} e^2_1e_2+4e_4<4e_1e_3~. \end{equation}
We are going to show that for $d \ge 3$, 
\begin{equation}\label{ineq***} e^2_1e_2+4e_4>4e_1e_3 \end{equation}
which contradiction proves part (4). 
For $d=3$, one has $ e_1e_2 \ge 9e_3 $ (see Proposition 2 
on page 2 of \cite{Mitev}). Suppose that $(\ref{ineq***})$ holds true 
up to degree $d \ge3$. We proceed by induction on $d$. 
Recall that we denote by $(-a_j)$ the negative roots of $P$. 
For degree $d+1$, we have to show that 
$$\begin{array}{lc}
(a_{d-1}+e_1)^2 (a_{d-1}e_1+e_2)+4(a_{d-1}e_3+e_4)&>\\ \\ 
4(a_{d-1}+e_1)(a_{d-1}e_2+e_3)&,\end{array}$$ 
where $e_j$ are the elementary symmetric functions of the 
quantities $a_1, \ldots, a_{d-2}$, which is simplified to 
\begin{equation}\label{ineqA1}a_{d-1}^3e_1 +2a_{d-1}^2e_1^2+a_{d-1}e_1^3>
3a_{d-1}^2e_2+2a_{d-1}e_1e_2~. \end{equation}
Newton's inequality $\displaystyle{e_1^2\ge \frac{2d}{d-1} e_2}$ 
implies the following ones: 
\begin{equation}\label{ineqB}
\begin{array}{cccc}2a_{d-1}^2e_1^2&\ge&4a_{d-1}^2\frac{d}{d-1}e_2&{\rm and}\\ \\   
 a_{d-1}e_1^3&\ge&\frac{2d}{d-1} a_{d-1}e_1 e_2&\end{array} \end{equation}
From inequalities (\ref{ineqB}) we conclude that 

$$\begin{array}{ccl}a_{d-1}^3e_1 +2a_{d-1}^2e_1^2+
a_{d-1}e_1^3&>&4a_{d-1}^2\frac{d}{d-1}e_2 + 
\frac{2d}{d-1} a_{d-1}e_1 e_2\\ \\&>&          
3a_{d-1}^2e_2+2a_{d-1}e_1 e_2\end{array}$$
which proves (\ref{ineqA1}) and hence $(\ref{ineq***})$ as well.

\section{Proof of Proposition~\protect\ref{propthreecases}\protect\label{secprpropthreecases}}

We give in detail the proof of part (1). For part (2), we point out 
only the differences w.r.t. the proof of part (1). These differences 
are only technical in character. In order to give easily references to 
the different parts of the proof, the latter are marked by $1^0$, $2^0$, 
$\ldots$,~$6^0$.

\begin{proof}[Proof of part (1) of Proposition~\ref{propthreecases}]
%
$1^0$. Suppose that there exists a polynomial $P:=RQ$, 
where 

$$R:=(x+u_1)\cdots (x+u_7)~~~,~~~~u_j>0~~~~,~~~\, 
{\rm and}~~~\, Q:=x^2+rx+s~,$$   
which realizes one of the two SPs $\Sigma _{3,4,3}$ or $\Sigma _{2,4,4}$ 
with the AP $(0,7)$. We set 
$P:=\sum _{j=0}^9p_jx^j$ and $Q:=(x-a)^2+b$, $a\in \mathbb{R}$, $b\geq 0$. 
We show that 
for $b=0$, there exists no polynomial satisfying the conditions 

\begin{equation}\label{twoineq}p_3<0~~~,~~~ 
p_6<0~~~{\rm ,~~~\, resp.}~~~\, p_4<0~~~,~~~p_7<0~.
\end{equation} 
Hence this holds true also for $b>0$ because 
$P=R\cdot Q|_{b=0}+bR$, and the polynomial $R$ has all coefficients positive. 
This in turn implies that for $b\geq 0$, there exists no polynomial  
$P$ realizing the SP $\Sigma _{3,4,3}$ or 
$\Sigma _{2,4,4}$. So from now on we concentrate on the case $b=0$.

$2^0$. Suppose that a polynomial $P$ with $b=0$ and 
$u_1\geq u_2\geq \cdots \geq u_7\geq 0$ satisfying the left or right 
couple of inequalities (\ref{twoineq}) exists. We make the change of variables 
$x\mapsto u_1x$ and after this we multiply $P$ by $(1/u_1)^9$ (these changes 
preserve the signs of the coefficients), 
so now we are in the case $u_1=1$. Denote by 
$\Delta\subset \mathbb{R}_+^7=\{ (u_2,u_3,\ldots ,u_7,a)\}$ the set  
on which one has conditions (\ref{twoineq}). The closure $\overline{\Delta}$ 
of this set is compact. Indeed, one has $p_1\geq 0$ hence 

$$1+u_2+\cdots +u_7-2a\geq 0~~~\, {\rm and}~~~\, u_j\leq 1~~~\, 
{\rm hence}~~~\, a\in [0,7/2]~.$$
The set $\overline{\Delta}$ can be stratified according to the 
multiplicity vector of the variables $(u_2,\ldots ,u_7)$ 
and the possible equalities $u_j=0$, $u_i=1$ and/or $a=0$. 
Suppose that the set $\overline{\Delta}$ 
contains a polynomial satisfying the inequalities (\ref{twoineq}). 

\begin{rems}\label{remsDelta} 
{\rm (1) For this polynomial one has $a>0$, otherwise all its coefficients 
are nonnegative. One has also $u_j>0$, $j=2$, $\ldots$, $7$. Indeed, in 
the case of $\Sigma _{3,4,3}$ (resp. $\Sigma _{2,4,4}$), 
if three or more (resp. if four or more) 
of the variables $u_j$ are $0$, then the polynomial $P$ has 
less than two sign changes in the sequence of its coefficients and by the 
Descartes rule of signs $P$ cannot have two positive roots counted with 
multiplicity. For $\Sigma _{3,4,3}$, if exactly one or two  
of the variables $u_j$ equal $0$, then 
the polynomial $P$ is the product of $x$ with a polynomial defining the 
SP $\Sigma _{3,4,2}$ or of $x^2$ with a polynomial defining the 
SP $\Sigma _{3,4,1}$. However these SPs are not realizable with the APs 
$(0,6)$ or $(0,5)$ respectively, see \cite{KoCzMJ} and \cite{FoKoSh}. For $\Sigma _{2,4,4}$, if exactly 
one, two or three of the variables $u_j$ equal $0$, then $P$ is the product of 
$x$, $x^2$ or $x^3$ with a polynomial defining respectively the SP 
 $\Sigma _{2,4,3}$, $\Sigma _{2,4,2}$ or $\Sigma _{2,4,1}$ which 
is not realizable with the AP $(0,6)$, $(0,5)$ or $(0,4)$, see \cite{KoCzMJ}, \cite{FoKoSh} and~\cite{AlFu}. 

(2) The set $\overline{\Delta}$ being compact the quantity $p_3+p_6$, 
resp. $p_4+p_7$, attains its minimum $-\delta$ on it ($\delta >0$). 
Consider the set 
$\Delta ^{\bullet}\subset \overline{\Delta}$ on which one has 
$p_3+p_6\leq -\delta /2$, resp. $p_4+p_7\leq -\delta /2$. On this set one has 
$a\geq 2^{-9}\delta$. Indeed, $P=x^2R-2axR+a^2R$, 
so any coefficient of $P$ is not less than 
$-2a\sigma$, where $\sigma$ is the sum of all coefficients of $R$ 
(they are all nonnegative); clearly 
$\sigma \leq 2^7$ (follows from $u_j\in [0,1]$). 

(3) There exists $\delta _*>0$ such that on the set $\Delta ^{\bullet}$,  
one has also $u_j\geq \delta _*$. This follows from part (1) 
of the present remarks.}
\end{rems}

$3^0$. We need some technical lemmas:

\begin{lm}\label{lmnotfour}
The minimum of the quantity $p_3+p_6$, resp. $p_4+p_7$, is not attained 
at a point of the set $\Delta ^{\bullet}$ with three or more distinct 
and distinct from $1$ among the 
quantities $u_j$, $2\leq j\leq 7$.
\end{lm}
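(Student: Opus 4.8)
The strategy is a standard Lagrange-multiplier / boundary-stratum argument on the compact semialgebraic set $\Delta^\bullet$, exploiting the fact that the objective function $p_3+p_6$ (resp. $p_4+p_7$) and all the defining inequalities are polynomial in the variables $(u_2,\dots,u_7,a)$, together with the symmetry of $P$'s coefficients in the $u_j$. Suppose, for contradiction, that the minimum of $p_3+p_6$ on $\Delta^\bullet$ is attained at a point $\pi^\star$ at which at least three of the values $u_2,\dots,u_7$ are pairwise distinct and all different from $1$. By Remarks~\ref{remsDelta}, at $\pi^\star$ we have $a\geq 2^{-9}\delta>0$ and $u_j\geq\delta_*>0$ for all $j$, so $\pi^\star$ lies in the interior of the region cut out by the constraints $u_j\ge 0$, $u_j\le 1$ (for the free $u_j$'s), $a\ge 0$; hence, along the stratum where the coincidence/equality pattern of the $u_j$'s is held fixed, $\pi^\star$ is an unconstrained local minimum in the \emph{remaining} free coordinates — in particular in any two of the three distinct, non-unit values, say $u_i$ and $u_k$, which can be varied independently while staying in $\Delta^\bullet$ (the open inequalities $p_3<0$, $p_6<0$, and whichever of $p_0,p_1,p_2,p_8,p_9>0$ hold, remain valid under a small perturbation, and the sign-change count is unaffected).

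\textbf{Key steps.} First I would isolate the dependence of $p_3+p_6$ on the chosen free pair $(u_i,u_k)$. Writing $P=R\cdot Q$ with $Q=x^2-2ax$ (recall $b=0$ in this part) and $R=\prod(x+u_\ell)$, each coefficient $p_m$ is an affine function of the elementary symmetric functions $e_{m},e_{m-1},e_{m-2}$ of the $u_\ell$'s, namely $p_m=e_m-2a\,e_{m-1}$ (with $e_{-1}:=0$, $e_0:=1$). So $p_3+p_6=(e_3+e_6)-2a(e_2+e_5)$, and as a function of one variable $u_i$ (the others fixed) this is a polynomial of degree at most $1$ in $u_i$ once we factor $R=(x+u_i)\tilde R$; more precisely $e_m=u_i\tilde e_{m-1}+\tilde e_m$ where $\tilde e$ are the symmetric functions of the remaining five variables. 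Hence $p_3+p_6$ is \emph{affine} in $u_i$ for fixed remaining data, and likewise affine in $u_k$. An affine function on an interval attains its minimum at an endpoint unless it is constant; since $u_i$ ranges over an open interval around its value at $\pi^\star$ (by positivity and the strict inequalities $u_i<1$, $u_i\ne u_j$ for the $u_j$ it is assumed distinct from), the only way $\pi^\star$ can be a local minimum is if $p_3+p_6$ is \emph{independent} of $u_i$, i.e. the coefficient of $u_i$ vanishes: $(\tilde e_2+\tilde e_5)-2a(\tilde e_1+\tilde e_4)=0$. The same reasoning with $u_k$ in place of $u_i$, and with the \emph{other} coefficient constraints ($p_0>0$ forces $s=0$... wait, $p_0=a^2 e_7>0$ automatically), gives a parallel vanishing condition. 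I would then combine these two (or three) linear-in-the-symmetric-function identities — one from each of the $\ge 3$ distinct non-unit values — to derive enough independent algebraic relations among $a$ and the $u_\ell$'s to contradict either the positivity of all the $u_\ell$ and $a$, or one of the sign conditions $p_3<0$, $p_6<0$ themselves.

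\textbf{Execution and the main obstacle.} Concretely: if $u_i$ and $u_k$ are two of the distinct non-unit values and $\hat R$ denotes the product of the four variables other than $u_i,u_k$ with symmetric functions $\hat e_\bullet$, then $R=(x+u_i)(x+u_k)\hat R$, and "independence of $p_3+p_6$ in $u_i$" and "in $u_k$" together force the coefficient of $u_iu_k$ and the two linear coefficients in the bilinear expansion of $p_3+p_6$ in $(u_i,u_k)$ to vanish (this is the analogue of saying a bilinear form restricted near an interior critical point has vanishing gradient). These read as $(\hat e_1+\hat e_4)-2a(\hat e_0+\hat e_3)=0$ and $(\hat e_2+\hat e_5)-2a(\hat e_1+\hat e_4)=0$, i.e. two consecutive members of the sequence $f_m:=\hat e_{m}+\hat e_{m+3}$ satisfy $f_1=2a f_0$ and $f_2=2a f_1$; but $\hat e_\bullet$ are symmetric functions of only \emph{four} positive numbers, so $\hat e_4$ is their product and $\hat e_5=0$, $\hat e_6=0$, giving $f_0=1$, $f_1=\hat e_1$, $f_2=\hat e_2+\hat e_5=\hat e_2$, and then $2a=\hat e_1$ and $4a^2=\hat e_2$, whence $\hat e_1^2=\hat e_2$ — impossible for four positive reals by $\hat e_1^2>\hat e_2$ (expand: $\hat e_1^2$ contains all $\hat e_2$ cross-terms with coefficient $2$ plus the positive squares). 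That contradiction finishes the case of $p_3+p_6$; the case $p_4+p_7$ is identical with the index shift $p_m=e_m-2a e_{m-1}$ applied to $m\in\{4,7\}$, so $f_m=\hat e_{m}+\hat e_{m+3}$ is replaced by the shifted pair and one again lands on $\hat e_1^2=\hat e_2$ (or $\hat e_2^2=\hat e_1\hat e_3$-type Newton-violating identities) for the four-variable reduction. The main obstacle I anticipate is purely bookkeeping: correctly tracking which coefficient constraints $p_m>0$ survive as \emph{open} conditions near $\pi^\star$ (so that two-variable perturbations genuinely stay in $\Delta^\bullet$) for each of the two sign patterns $\Sigma_{3,4,3}$ and $\Sigma_{2,4,4}$, and making sure that when three of the $u_j$ are equal to each other but distinct from $1$ and from the rest, one still has at least \emph{two} independent perturbation directions within the stratum — which one does, since one may vary the common value and one other distinct value independently. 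No step requires a genuinely new idea beyond "affine in each free variable $\Rightarrow$ minimum at boundary unless constant $\Rightarrow$ symmetric-function identity $\Rightarrow$ Newton/AM-GM contradiction."
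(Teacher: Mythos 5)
There is a genuine gap — in fact several concrete errors that break the execution. First, with $b=0$ the quadratic factor is $(x-a)^2=x^2-2ax+a^2$, not $x^2-2ax$; consequently each coefficient of $P$ involves \emph{three} consecutive elementary symmetric functions, $p_j=a^2e_{j'}-2a\,e_{j'+1}+e_{j'+2}$, and your first-order conditions are not the clean relations $f_1=2af_0$, $f_2=2af_1$ you write down. Second, removing two of the seven factors $(x+u_\ell)$ leaves \emph{five}, not four, so $\hat e_5\neq 0$ and the terminal contradiction $\hat e_1^2=\hat e_2$ never arises. If you redo the computation correctly, the two vanishing conditions coming from the pair $(u_i,u_k)$ are two polynomial relations mixing $a$ with $\hat e_1,\dots,\hat e_5$ (e.g. $\hat e_4-2a\hat e_3+a^2\hat e_2+\hat e_1-2a=0$) that are not visibly contradictory for positive data: two perturbation directions simply do not suffice. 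Third, your reduction to an \emph{unconstrained} interior local minimum of a bilinear function presupposes that $p_3<0$ and $p_6<0$ hold strictly at the minimizer; but the minimum of $p_3+p_6$ is taken over $\overline{\Delta}$, where one of the two coefficients may vanish, in which case the point is only a constrained (KKT) minimum and the vanishing of the bilinear coefficients does not follow.

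The paper's proof is built precisely to avoid these traps: it perturbs \emph{all three} distinct non-unit roots \emph{and} $a$, writes each $P_{v_j}$ and $P_a$ as a cubic multiple of $P^{\diamond}:=P_{v_1,v_2,v_3,a}$, and shows that the $2$-vectors of the coefficients of $x^{\mu}$ and $x^{\nu}$ of $P^{\diamond},xP^{\diamond},x^2P^{\diamond},x^3P^{\diamond}$ span $\mathbb{R}^2$ (using $E\neq 0$ from Remarks~\ref{remsDelta} and Rolle's theorem applied to $P^{\diamond}$ and its derivatives). The distinctness of $v_1,v_2,v_3$ is exactly what makes the four cubics $(x+v_j)(x+v_k)(x-a)$ and $(x+v_1)(x+v_2)(x+v_3)$ span the space of all cubics, so one can choose a direction along which \emph{both} $p_{\mu}$ and $p_{\nu}$ strictly decrease — which keeps the point in $\Delta$ and contradicts minimality regardless of whether a constraint is active. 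Your proposal discards the third distinct value and the $a$-direction, which are the essential ingredients; as written it does not prove the lemma.
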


The lemmas used in the proof of part (1) of Proposition~\ref{propthreecases} 
are proved after the proof of part~(1). 

\begin{lm}\label{lmnottwo}
Conditions (\ref{twoineq}) fail for $u_1=u_2=\cdots =u_7=1$ and any $a>0$.
\end{lm}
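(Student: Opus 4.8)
The plan is to reduce Lemma~\ref{lmnottwo} to an explicit one--variable computation. With $u_1=\cdots=u_7=1$ one has $R=(x+1)^7$, so $P=(x+1)^7((x-a)^2)=(x-a)^2(x+1)^7$ in the case $b=0$; writing $p_j$ for its coefficients, the quantities $p_3+p_6$, resp. $p_4+p_7$, become explicit polynomials in the single real parameter $a$ (with $a>0$). Concretely, $P=(x^2-2ax+a^2)(x+1)^7$, and if $\binom{7}{i}$ are the binomial coefficients of $(x+1)^7$, then $p_j=\binom{7}{j}-2a\binom{7}{j-1}+a^2\binom{7}{j-2}$ (with the convention that $\binom{7}{i}=0$ for $i<0$ or $i>7$). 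So I would first write down $p_3,p_4,p_6,p_7$ explicitly:
\begin{equation}\label{eqp3467}
\begin{array}{l}
p_3=35-42a+21a^2~,\qquad p_6=7-70a+21a^2~,\\[1mm]
p_4=35-70a+35a^2~,\qquad p_7=1-14a+7a^2~.
\end{array}
\end{equation}

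Next I would show that neither of conditions~(\ref{twoineq}) can hold for any $a>0$ by checking the relevant signs. For $\Sigma_{3,4,3}$ I must rule out $p_3<0$ and $p_6<0$ simultaneously; the discriminant of $p_3=21a^2-42a+35$ is $42^2-4\cdot21\cdot35<0$, so in fact $p_3>0$ for all real $a$, and a fortiori conditions~(\ref{twoineq}) (left pair) fail. For $\Sigma_{2,4,4}$ I must rule out $p_4<0$ and $p_7<0$ simultaneously; here $p_4=35a^2-70a+35=35(a-1)^2\ge 0$, so again $p_4\ge 0$ for all $a$ and the right pair of~(\ref{twoineq}) cannot hold. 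Thus in both cases one of the two required strict inequalities is violated for every $a>0$, which is exactly the claim of the lemma.

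I expect no serious obstacle here: the lemma is really a base--case sanity check reducing to a discriminant computation for a quadratic in $a$. The only thing to be careful about is bookkeeping of the binomial coefficients $\binom{7}{i}$ and the shift by two in the index when multiplying by $(x-a)^2$, and verifying that the same coefficient ($p_3$ in one case, $p_4$ in the other) is the one that turns out to be globally nonnegative. A secondary point is consistency with the convention $b=0$ fixed in step $1^0$: since $P=R\cdot Q|_{b=0}+bR$ with $R$ having positive coefficients, once $p_3\ge 0$ or $p_4\ge 0$ for $b=0$ it only increases for $b>0$, so it suffices to treat $b=0$, and that is exactly what the statement asks for.
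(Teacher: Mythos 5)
Your reduction to a one-variable computation is the right idea and is exactly what the paper does, but the computation itself is wrong, and the error is fatal to your argument rather than cosmetic. In step $1^0$ the paper sets $P=\sum_{j=0}^{9}p_jx^j$, so $p_j$ is the coefficient of $x^j$; for $P=(x+1)^7(x-a)^2$ this gives $p_j=\binom{7}{j-2}-2a\binom{7}{j-1}+a^2\binom{7}{j}$, the reverse of the formula you wrote (and your displayed values do not even agree with your own formula: e.g.\ your formula at $j=3$ gives $35-42a+7a^2$, not $35-42a+21a^2$). The correct coefficients are
$$p_3=7-42a+35a^2,\quad p_4=21-70a+35a^2,\quad p_6=35-42a+7a^2,\quad p_7=21-14a+a^2.$$
Each of these has \emph{positive} discriminant and two positive roots, so none of them is globally nonnegative: for instance $p_3(1/2)=7-21+35/4<0$ and $p_4(1)=21-70+35<0$. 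Your entire proof rests on the claim that $p_3>0$ for all $a$ (resp.\ $p_4\ge 0$ for all $a$), and that claim is false, so the proof collapses.

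The lemma is nevertheless true, but for a different reason, which is the paper's actual argument: the interval on which $p_3<0$ is $(1/5,1)$ while the interval on which $p_6<0$ is $(1,5)$, and these are disjoint; likewise $p_4<0$ on $(0.36\ldots,1.63\ldots)$ while $p_7<0$ on $(1.70\ldots,12.2\ldots)$, again disjoint. Hence no $a>0$ makes both members of either pair in (\ref{twoineq}) negative simultaneously. So the fix is not a sign of a coefficient but a comparison of the two negativity intervals; you need to compute all four quadratics correctly, extract their roots, and check that the relevant pairs of intervals do not overlap.
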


Thus to prove Proposition~\ref{propthreecases} we have to consider 
only the case when exactly one or two of the quantities 
$u_j$ are distinct from $1$. We use the following result:

%

\begin{lm}\label{lmdiscrim}
For $d\geq 4$, set $P:=RQ$, where $R:=\prod _{i=1}^{d-2}(x+u_i)$, $u_i>0$, 
$Q:=(x-a)^2$. Then the coefficients $p_j$ of $P$, $j=2$, $\ldots$, $d-2$,  
are quadratic polynomials in $a$ 
with positive leading coefficients and with two distinct positive roots.
\end{lm}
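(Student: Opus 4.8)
\textbf{Plan of proof for Lemma~\ref{lmdiscrim}.}
The plan is to write $P=RQ$ with $Q=(x-a)^2=x^2-2ax+a^2$ and to read off the coefficient $p_j$ of $x^j$ directly from the convolution of the coefficients of $R$ with those of $Q$. Denoting by $r_k$ the coefficient of $x^k$ in $R$ (so $r_0=\prod u_i>0$, $r_{d-2}=1$, and all $r_k>0$ since each $u_i>0$), one has for $2\le j\le d-2$
$$p_j=r_{j-2}-2a\,r_{j-1}+a^2\,r_j~,$$
which is a quadratic polynomial in $a$ with leading coefficient $r_j>0$ (this uses $j\le d-2$, so that $r_j$ is genuinely a coefficient of $R$ and not forced to vanish). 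So the first two assertions are immediate once the indexing is set up carefully; the only real content is the claim about the roots.

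For the root count, I would compute the discriminant of $p_j$ as a quadratic in $a$, namely $4r_{j-1}^2-4r_jr_{j-2}=4(r_{j-1}^2-r_jr_{j-2})$. The polynomial $R=\prod_{i=1}^{d-2}(x+u_i)$ has all real (negative) roots, hence by Newton's inequalities applied to $R$ one has $r_{j-1}^2>r_jr_{j-2}$ for each interior index $j$ with $1\le j\le d-3$, i.e.\ exactly for the range $2\le j\le d-2$ of the statement (strictness holds because the $u_i$ are positive, so $R$ has no zero root and the Newton inequalities for a polynomial with all real roots are strict unless a degenerate configuration occurs; with $u_i>0$ this degeneracy is excluded). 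Therefore the discriminant is strictly positive and $p_j$ has two distinct real roots. Finally, both roots are positive: their sum is $2r_{j-1}/r_j>0$ and their product is $r_{j-2}/r_j>0$, so by Vieta both roots lie in $(0,\infty)$. This establishes all three claims.

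The step I expect to require the most care is the precise invocation of Newton's inequalities: one must make sure the indices line up (the relevant inequalities are the ones for the \emph{elementary symmetric functions} of the $u_i$, equivalently for the coefficients $r_k$ read in the right order), and one must argue that the inequalities are \emph{strict}, which is where the hypothesis $u_i>0$ (all roots of $R$ nonzero, and $R$ having only real roots of degree $d-2\ge 2$) is used. Everything else — the convolution formula for $p_j$, the sign of the leading coefficient, and the positivity of the two roots via Vieta — is routine bookkeeping and does not need the full computation to be spelled out.
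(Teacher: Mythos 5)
Your proposal is correct in outline and follows essentially the same route as the paper: write $p_j=a^2r_j-2ar_{j-1}+r_{j-2}$, read off the positive leading coefficient from $r_j>0$, get two distinct real roots from the discriminant condition $r_{j-1}^2>r_{j-2}r_j$, and get positivity of both roots from Vieta (the paper compresses the last two steps into the equivalence ``two distinct positive roots if and only if $r_{j-1}^2>r_{j-2}r_j$''). The one place where your justification is off is precisely the step you flagged as delicate: the \emph{strictness} of $r_{j-1}^2>r_{j-2}r_j$. Newton's inequalities concern the normalized quantities $r_k/{d-2\choose k}$, and they become equalities when all the $u_i$ coincide --- a configuration that is \emph{not} excluded by the hypothesis $u_i>0$, so your appeal to ``the degeneracy is excluded'' does not stand. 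The strict inequality for the un-normalized coefficients comes instead from the strict log-concavity of the binomial coefficients, ${d-2\choose k}^2>{d-2\choose k-1}{d-2\choose k+1}$ (equivalent to $((k+1)/k)\cdot((d-1-k)/(d-2-k))>1$), which combined with the possibly non-strict normalized Newton inequalities yields $r_k^2>r_{k-1}r_{k+1}$ for $k=1,\ldots,d-3$, i.e.\ exactly what is needed. This is how the paper argues; with that replacement your proof is complete, and nothing else in it needs changing.
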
 

$4^0$. Further we consider several different cases 
according to the multiplicity of $u_{j_0}$, 
the smallest of the variables $u_j$. 
In the proofs we use linear changes $x\mapsto \chi x$, 
$\chi >0$, followed by $P\mapsto \chi ^{-9}P$. These changes preserve 
the signs of the coefficients; the condition $u_1=1$ is lost and the 
condition $u_{j_0}=1$, $j_0\neq 1$, is obtained. The aim of this is to have 
more explicit computations. In all the cases the polynomial $R$ is of the form 
$R=(x+1)^{s_1}(x+v)^{s_2}(x+w)^{s_3}$, $s_1+s_2+s_3=7$, and one has 
$v>1$, $w>1$, but $v$ and $w$ are not necessarily distinct and we do not 
suppose that $v>w$ or $v<w$ (which permits us to assume that $s_2\geq s_3$). Allowing the 
equality $v=w$ means treating together cases of exactly two or exactly 
three distinct 
quantities $u_j$ (counting also $u_1=1$). 
We list the triples $(s_1,s_2,s_3)$ defining the cases:

$$\begin{array}{cccccccccc}
(5,1,1)&,&(4,2,1)&,&(3,3,1)&,&(3,2,2)&,&(2,4,1)&,\\ \\ 
(2,3,2)&,&(1,5,1)&,&(1,4,2)&&{\rm and}&&(1,3,3)&.\end{array}$$
The cases when there are exactly two different quantities $u_j$ one of which is 
$u_1=1$ can be coded in a similar way. E.g. $(5,2)$ means that 
$R=(x+1)^5(x+u)^2$, $u>1$. The nonrealizability of these cases 
follows automatically from the one of the above $9$ ones 
(when $v$ and $w$ coalesce), 
with the only exception of $R=(x+1)^6(x+w)$ (the case $(6,1)$). 

\begin{lm}\label{lm61}
Conditions (\ref{twoineq}) fail in case $(6,1)$.
\end{lm}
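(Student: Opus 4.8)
The plan is to write the four coefficients entering condition~(\ref{twoineq}) explicitly as functions of $a$ and $w$ and then, for each of the two pairs $(p_3,p_6)$ and $(p_4,p_7)$, to exhibit a \emph{positive} linear combination that is manifestly nonnegative; this is incompatible with both members of the pair being negative.

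First I would expand $R=(x+1)^6(x+w)$, getting $r_7=1$, $r_6=6+w$, $r_5=15+6w$, $r_4=20+15w$, $r_3=15+20w$, $r_2=6+15w$, $r_1=1+6w$, $r_0=w$. Since $Q=(x-a)^2=x^2-2ax+a^2$, the coefficient of $x^k$ in $P=RQ$ is $p_k=r_{k-2}-2ar_{k-1}+a^2r_k$, which by Lemma~\ref{lmdiscrim} is a quadratic in $a$ with positive leading coefficient. Substituting and grouping by powers of $w$ gives
\[
\begin{array}{ll}
p_3=(15a^2-12a+1)+w(20a^2-30a+6), & p_6=(6a^2-30a+20)+w(a^2-12a+15),\\[2mm]
p_4=(20a^2-30a+6)+w(15a^2-40a+15), & p_7=(a^2-12a+15)+w(6-2a).
\end{array}
\]

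For the SP $\Sigma_{3,4,3}$ one must exclude $p_3<0$ and $p_6<0$ simultaneously. Adding the two expressions, both the $w^0$-part and the $w^1$-part of $p_3+p_6$ equal $21a^2-42a+21=21(a-1)^2$, so
\[
p_3+p_6=21(1+w)(a-1)^2\ge 0 ,
\]
and $p_3,p_6$ cannot both be negative. For the SP $\Sigma_{2,4,4}$ one must exclude $p_4<0$ and $p_7<0$; here a bare sum is not enough, but $p_4+5p_7$ works: its $w^0$-part is $25a^2-90a+81=(5a-9)^2$ and its $w^1$-part is $15a^2-50a+45$, a quadratic in $a$ with positive leading coefficient and negative discriminant $2500-2700=-200$, hence strictly positive. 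Therefore
\[
p_4+5p_7=(5a-9)^2+w(15a^2-50a+45)>0
\]
for all $a$ and all $w>0$, so $p_4,p_7$ cannot both be negative. In either case condition~(\ref{twoineq}) fails, which is the assertion.

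The only non-mechanical step is choosing the multipliers $\lambda$ and $\mu$ in $p_3+\lambda p_6$ and $p_4+\mu p_7$: one requires that both the $w^0$- and the $w^1$-component be nonnegative quadratics in $a$, i.e.\ have nonpositive discriminant; this forces $\lambda=1$ and confines $\mu$ to the interval $[\,25-15\sqrt2,\ 5\,]$, of which $\mu=5$ is the cleanest choice. I do not anticipate any genuine obstacle: the computation is short, the identities are exact, and the vanishing of $p_3+p_6$ at $a=w=1$ merely reflects the already-settled degenerate case $R=(x+1)^7$ (Lemma~\ref{lmnottwo}).
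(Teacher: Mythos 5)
Your computations are correct and your argument is sound: the expansions of $p_3,p_4,p_6,p_7$ agree with the paper's, the identity $p_3+p_6=21(1+w)(a-1)^2\ge 0$ is exact, and $p_4+5p_7=(5a-9)^2+w\,(15a^2-50a+45)$ is indeed positive for $w>0$ since the discriminant $2500-2700=-200$ of the $w$-coefficient is negative; in each case a nonnegative positive combination is incompatible with both members of the pair being negative. However, your route is genuinely different from the paper's. The paper treats case $(6,1)$ by the same scheme it uses for the nine three-parameter cases: it computes $\mathrm{Res}(p_4,p_7,a)=7056+2520w+540w^2+3960w^3+1800w^4$ (no positive roots) and $\mathrm{Res}(p_3,p_6,a)=7056(w-1)^2(w+1)^2$, then checks at sample values $w=1/2,1,2$ that the two roots of the first quadratic in $a$ lie below the two roots of the second, and concludes by continuity that the negativity intervals never overlap. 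Your positivity certificates replace the resultant-plus-continuity argument with a single closed-form identity per pair, verifiable by hand with no numerical root computation; this is cleaner and arguably more convincing for this particular case. What the paper's method buys is uniformity: the same resultant-and-interlacing machinery carries over to the cases $(s_1,s_2,s_3)$ with two free parameters $v,w$, where a one-line sum-of-squares certificate is not readily available, whereas your linear-combination trick is tailored to the one-parameter family $(6,1)$. Your closing observation that $p_3+p_6$ vanishes only at $a=1$, $w=1$, matching the degenerate case $R=(x+1)^7$ of Lemma~\ref{lmnottwo}, is consistent with the paper's factorization $\mathrm{Res}(p_3,p_6,a)=7056(w-1)^2(w+1)^2$.
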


$5^0$. We consider the SP $\Sigma _{2,4,4}$ first. We compute using MAPLE the resultant 
Res\,$(p_4,p_7,a)$ as a function of $v$ and $w$. Then we set 
$v:=1+V$, $w:=1+W$, $V>0$, $W>0$. In all $9$ cases this resultant is a 
polynomial in $V$ and $W$ with all coefficients positive. Hence for no 
value of $V>0$ and $W>0$ do the coefficients $p_4$ and $p_7$ vanish together. 

In all $9$ cases, the leading coefficients of $p_4$ and $p_7$ considered as 
quadratic polynomials in $a$ are positive. In fact, they are polynomials in 
$v$ and $w$ with all coefficients positive. 
For $v=w=2$, we compute the two roots $y_1<y_2$ of $p_4$ and the two roots 
$y_3<y_4$ of $p_7$. In all $9$ cases, one has $y_1<y_2<y_3<y_4$. By continuity, 
these inequalities hold true for all values of $v>1$ and $w>1$. Hence 
the intervals $(y_1,y_2)$ and $(y_3,y_4)$ on which $p_4$ and $p_7$ are negative 
do not intersect for any $v>1$, $w>1$. This proves the proposition in the case 
of $\Sigma _{2,4,4}$. 

$6^0$. Consider now the SP $\Sigma _{3,4,3}$. Recall that the polynomials $P(x)$ and 
$x^9P(1/x)$ have one and the same numbers of positive and negative roots. 
Their roots are mutually reciprocal and they define the same SP. 
Hence the non-realizability of the case 
$(5,1,1)$ (resp. $(4,2,1)$, or $(3,3,1)$, or $(3,2,2)$) implies the one of 
$(1,5,1)$ (resp. $(2,4,1)$ and $(1,4,2)$, or $(1,3,3)$, or $(2,3,2)$). 

As in the case of $\Sigma _{2,4,4}$, we express Res\,$(p_3,p_6,a)$ 
as a polynomial of $v$ and $w$, and then of $V$ and $W$. In cases 
$(5,1,1)$, $(4,2,1)$ and $(3,2,2)$, this resultant has a single monomial with 
negative coefficient, this is $UV$. We give the monomials $VW$, $V^2$ and $W^2$ 
for these three cases:

$$\begin{array}{lll}
(5,1,1)&-9408VW+28224V^2+28224W^2&,\\ \\ 
(4,2,1)&-18816VW+47040V^2+28224W^2&,\\ \\ 
(3,2,2)&-37632VW+47040V^2+47040W^2&.\end{array}$$
The discriminants of these quadratic homogeneous 
polynomials are negative hence they are nonnegative (and positive for $V>0$, 
$W>0$). In the case of $(3,3,1)$, there are exactly two monomials 
with negative coefficients, namely $VW$ and $V^2W$. The resultant equals 

$$(-28224VW+56448V^2+28224W^2)+V(-42336VW+127008W^2+282240V^2)+\cdots$$
(we skip all other monomials; their coefficients are positive). The two 
quadratic homogeneous polynomials have negative discriminants, so they are 
positive for $V>0$, $W>0$.  

The rest of the reasoning goes by exact analogy with the case of 
$\Sigma _{2,4,4}$.

\end{proof}

\begin{proof}[Proof of Lemma~\ref{lmnotfour}]
Denote by $v_1$, $v_2$ and $v_3$ three distinct and distinct from $1$ 
of the variables $u_j$. We prove that one can choose 
$v_1^*$, $v_2^*$, $v_3^*$, $a^*\in \mathbb{R}$ such that 
the infinitesimal change $v_j\mapsto v_j+\varepsilon v_j^*$, $j=1$, $2$, $3$,  
$a\mapsto a+\varepsilon a^*$, $\varepsilon >0$, 
results in 
$p_{\mu}\mapsto p_{\mu}+\varepsilon p_{\mu}^*+o(\varepsilon )$, 
$p_{\nu}\mapsto p_{\nu}+\varepsilon p_{\nu}^*+o(\varepsilon )$, where 
$(\mu ,\nu )=(3,6)$ or $(4,7)$ and $p_{\mu}^*<0$, $p_{\nu}^*<0$. Hence locally 
the quantity $p_{\mu}+p_{\nu}$ is not minimal. 

Set $P:=(x+v_1)^{\alpha _1}(x+v_2)^{\alpha _2}(x+v_3)^{\alpha _3}(x-a)^2P^{\dagger}$, 
where $a$, $-v_1$, $-v_2$ and 
$-v_3$ are not roots of $P^{\dagger}$ and $\alpha _j$ are the multiplicities of 
throots $-v_j$ of $P$. Set $P_{v_j}:=P/(x+v_j)$, 
$P_a:=P/(x-a)$, $P_{v_i,v_j}:=P/((x+v_i)(x+v_j))$, $P_{a,v_j}:=P/((x+a)(x+v_j))$ 
etc. Then the above infinitesimal change transforms $P$ into 

$$P+\varepsilon \tilde{P}+o(\varepsilon )~,~~~\, {\rm where}~~~\,  
\tilde{P}:=\sum _{j=1}^3\alpha _jv_j^*P_{v_j}-2a^*P_a~.$$   
We show that one can choose $v_j^*$ and $a^*$ such that the 
coefficients of $x^{\mu}$ and $x^{\nu}$  
of the polynomial $\tilde{P}$ (where $(\mu ,\nu )=(3,6)$ or $(4,7)$) are both 
negative from which the lemma follows. 
To this end we observe that each of the polynomials $P_{v_j}$ 
and $P_a$ is a linear combination of 
$P^{\diamond}:=P_{v_1,v_2,v_3,a}:=x^5+Ax^4+Bx^3+Cx^2+Dx+E$, 
$xP^{\diamond}$, $x^2P^{\diamond}$ and $x^3P_{v,w,a}$.  

We consider first the case of $\Sigma _{3,4,3}$, i.e $(\mu ,\nu )=(3,6)$. 
The $2$-vectors of coefficients of $x^3$ and $x^6$ of the polynomials 
$P^{\diamond}$, $xP^{\diamond}$, $x^2P^{\diamond}$ and $x^3P^{\diamond}$ equal  
$(B,0)$, $(C,1)$, $(D,A)$ and $(E,B)$ respectively. For $B\neq 0$, 
the first two 
of them are not collinear. As $E\neq 0$ (see parts (2) and (3) 
of Remarks~\ref{remsDelta}), 
for $B=0$, the second and fourth of these vectors are not 
collinear and the choice of $v_j^*$ and $a^*$ is possible. 

If $(\mu ,\nu )=(4,7)$, then the $2$-vectors of coefficients of $x^4$ and $x^7$ 
equal $(A,0)$, $(B,0)$, $(C,1)$ and $(D,A)$. One has either 
$A\neq 0$ or $B\neq 0$. Indeed, the polynomial $P^{\diamond}$ has all roots real 
and by Rolle's theorem this is the case of $(P^{\diamond})'$ and 
$(P^{\diamond})''$ as well. If $A=B=C=D=0\neq E$ 
(resp. $A=B=C=0\neq D$ or $A=B=0\neq C$), then $P^{\diamond}$ (resp. 
$(P^{\diamond})'$ or $(P^{\diamond})''$) has not all roots real. Thus either 
$(A,0)$, $(C,1)$ or $(B,0)$, $(C,1)$ are not collinear and 
the choice of $v_j^*$ and $a^*$ is possible.

\end{proof}

\begin{proof}[Proof of Lemma~\ref{lmnottwo}]

For the polynomial $(x+1)^7(x-a)^2$, we list its coefficients 
$p_3$, $p_4$, $p_6$ and $p_7$ and their roots: 

$$\begin{array}{cclccclc}
p_3&=&7-42a+35a^2&,&p_4&=&21-70a+35a^2&,\\ \\ 
0.2&,&1&&0.36\ldots&,&1.63\ldots&\\ \\ 
p_6&=&35-42a+7a^2&,&p_7&=&21-14a+a^2&.\\ \\ 
1&,&5&&1.70\ldots&,&12.2\ldots&\end{array}$$
Hence for no value of $a\geq 0$ does one have the left or the right two of 
conditions (\ref{twoineq}) together.

\end{proof}

\begin{proof}[Proof of Lemma~\ref{lmdiscrim}]

Set $R:=r_{d-2}x^{d-2}+r_{d-3}x^{d-3}+\cdots +r_0$, $r_j>0$, $r_{d-2}=1$. 
The polynomial $R$ has $d-2$ negative 
roots. Hence Newton's inequalities hold true:

\begin{equation}\label{eqNewton}
\left( r_k/{d-2\choose k}\right) ^2\geq 
\left( r_{k-1}/{d-2\choose k-1}\right) \left( r_{k+1}/{d-2\choose k+1}\right) 
~~~,~~~k=1,\ldots ,d-3~.
\end{equation}
The coefficient $p_{k+1}$ equals $a^2r_{k+1}-2ar_k+r_{k-1}$, 
$k=1$, $\ldots$, $d-3$, $r_{k+1}>0$. 
This quadratic polynomial has two distinct positive roots if and only if 
$r_k^2>r_{k-1}r_{k+1}$. These inequalities result from (\ref{eqNewton}) because 
${d-2\choose k}^2>{d-2\choose k-1}{d-2\choose k+1}$ (the latter inequality is 
equivalent to $((k+1)/k)((d-1-k)/(d-2-k))>1$ which is true). 

\end{proof}

\begin{proof}[Proof of Lemma~\ref{lm61}]
In case (6,1), with $P=(x+1)^6(x+w)(x-a)^2$, one has 

$$\begin{array}{lllc}
p_3&=&1+6w-12a-30wa+15a^2+20wa^2&,\\  p_4&=&6+15w-30a-40wa+20a^2+15wa^2&,\\ 
p_6&=&20+15w-30a-12wa+6a^2+wa^2&{\rm and}\\ p_7&=&15+6w-12a-2wa+a^2&.\end{array}
$$
For $w=1$, the roots of $p_4$ (resp. of  $p_7$) equal $0.36\ldots$ and $1.63\ldots$ (resp. $1.70\ldots$ and $12.29\ldots$). As Res\,$(p_4,p_7,a)=7056+2520w+540w^2+3960w^3+1800w^4$ has no positive roots, for any $w>0$ fixed, the two intervals of values of $a$, for which $p_4<0$ or $p_7<0$, do not intersect. Hence the couple of conditions $p_4<0$, $p_7<0$ fails.

One has Res\,$(p_3,p_6,a)=7056(w-1)^2(w+1)^2$, so only for $w=1$ do the polynomials $p_3$ and $p_6$ have a root in common. For $w=1/2$, $w=1$ and $w=2$, the roots of $p_3$ and $p_6$ equal respectively

$$\begin{array}{llllclllc}
w=1/2&0.17\ldots&,&0.90\ldots&{\rm and}&0.91\ldots&,&4.62\ldots&;\\ 
w=1&0.2&,&1&{\rm and}&1&,&5&;\\ 
w=2&0.21\ldots&,&1.09\ldots&{\rm and}&1.10\ldots&,&5.64\ldots&.
\end{array}$$
Hence again the intervals of values of $a$ for which $p_3<0$ or $p_6<0$ do not intersect and the couple of conditions $p_3<0$, $p_6<0$ fails.

\end{proof}

\begin{proof}[Proof of part (2) of Proposition~\ref{propthreecases}]
$1^0$. In the analog of part $1^0$ of the proof of part (1), 
we set $R:=(x+u_1)\cdots (x+u_8)$, $u_j>0$, and the analog of 
inequalities (\ref{twoineq}) reads $p_5<0$, $p_8<0$.

$2^0$. In the analog of part $2^0$ we make the change of variables 
$x\mapsto u_1x$ and then we multiply $P$ by $(1/u_1)^{10}$. We denote by 
$\Delta\subset \mathbb{R}_+^8=\{ (u_2,u_3,\ldots ,u_8,a)\}$ the set  
on which one has the conditions $p_5<0$, $p_8<0$. 
On the closure $\overline{\Delta}$ 
of this set one has $p_1\geq 0$ hence 

$$1+u_2+\cdots +u_8-2a\geq 0~~~\, {\rm and}~~~\, u_j\leq 1~~~\, 
{\rm hence}~~~\, a\in [0,4]~.$$
The analog of Remarks~\ref{remsDelta} reads:

\begin{rems}\label{remsDeltabis} 
{\rm (1) One has $u_j>0$, $j=2$, $\ldots$, $8$. Indeed, if 
exactly one of the quantities $u_j$ is $0$, then $P=xY$, where 
the polynomial $Y$ defines the SP $\Sigma _{2,4,4}$ which by 
part (1) of Proposition~\ref{propthreecases} is impossible. If more than one of 
the quantities $u_j$ is $0$, then see part (1) of Remarks~\ref{remsDelta} 
about $\Sigma _{2,4,4}$.   

(2) In the proof of part (2) of Proposition~\ref{propthreecases} we 
define the set $\Delta ^{\bullet}\subset \overline{\Delta}$ 
as the one on which one has 
$p_5+p_8\leq -\delta /2$. On this set one has 
$a\geq 2^{-10}\delta$. Indeed, as $P=x^2R-2axR+a^2R$, 
any coefficient of $P$ is not less than 
$-2a\sigma$, where $\sigma$ is the sum of all coefficients of $R$ 
(they are all nonnegative); clearly 
$\sigma \leq 2^8$ (follows from $u_j\in [0,1]$). }
\end{rems}

$3^0$. The analog of Lemma~\ref{lmnotfour} reads: {\em 
The minimum of the quantity $p_5+p_8$ is not attained
at a point of the set $\Delta ^{\bullet}$ with three or more distinct 
and distinct from $1$ among the
quantities $u_j$, $2\leq j\leq 8$.}

The proof is much the same as the one of Lemma~\ref{lmnotfour}. One sets 
$(\mu ,\nu )=(5,8)$. Each of the polynomials 
$P_{v_j}$ and $P_a$ is a linear combination of 
$P^{\diamond}:=P_{v_1,v_2,v_3,a}:=x^6+Ax^5+Bx^4+Cx^3+Dx^2+Ex+F$, 
$xP^{\diamond}$, $x^2P^{\diamond}$ and $x^3P_{v,w,a}$. The 
$2$-vectors of coefficients of $x^5$ and $x^8$ of the polynomials 
$P^{\diamond}$, $xP^{\diamond}$, $x^2P^{\diamond}$ and $x^3P^{\diamond}$ equal
$(A,0)$, $(B,0)$, $(C,1)$ and $(D,A)$ respectively. If $A\neq 0$ or 
$B\neq 0$, there are two noncollinear among the first three of these vectors
and the choice of $v_j^*$ and $a^*$ is possible. If $A=B=0$, then, as 
$F\neq 0$, either the polynomial $P^{\diamond}$ or one of its derivatives 
is not with all roots real which is a contradiction.

The analog of Lemma~\ref{lmnottwo} reads: {\em Conditions 
$p_5<0$, $p_8<0$ fail for $u_1=\cdots =u_8=1$ and any $a>0$.} 

Here's the proof of this. For the polynomial $(x+1)^8(x-a)^2$, we 
list its coefficients $p_5$, $p_8$ and their roots: 

$$\begin{array}{cclccclc}
p_5&=&28(2-5a+2a^2)&,&p_8&=&28-16a+a^2&,\\ \\ 
0.5&,&2&&2&,&14& 
\end{array}$$
Hence for no value of $a\geq 0$ does one have $p_5<0$, $p_8<0$.

We remind that Lemma~\ref{lmdiscrim} is formulated for any $d\geq 4$.

$4^0$. In the analog of part $4^0$ of the proof, one has 
$R=(x+1)^{s_1}(x+v)^{s_2}(x+w)^{s_3}$, $s_1+s_2+s_3=8$, and one has to 
consider the following cases of exactly three different quantities~$u_j$:

$$\begin{array}{cccccccccccc}
(6,1,1)&,&(5,2,1)&,&(4,3,1)&,&(4,2,2)&,&(3,4,1)&,&(3,3,2)&,\\ \\ 
(2,5,1)&,&(2,4,2)&,&(2,3,3)&,&(1,6,1)&,&(1,5,2)&{\rm and}
&(1,4,3)&.\end{array}$$
The cases with exactly two different quantities $u_j$ are treated in the 
same way. The exceptional case is the one with $R=(x+1)^7(x+w)$ 
(the case $(7,1)$).

\begin{lm}\label{lm71}
The conditions $p_5<0$, $p_8<0$ fail in case $(7,1)$.
\end{lm}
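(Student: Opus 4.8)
The plan is to follow verbatim the scheme of Lemma~\ref{lm61}. First I would write $P:=(x+1)^7(x+w)(x-a)^2$ with $w>0$ (the case label $(7,1)$ has $w>1$, but the argument below works for all $w>0$) and, as justified in Remarks~\ref{remsDeltabis}, with $a>0$. Expanding $(x+1)^7(x+w)=\sum_{j=0}^{8}r_jx^j$ and using $p_j=r_{j-2}-2ar_{j-1}+a^2r_j$, one gets the two quadratic polynomials in $a$
$$p_5=(35+21w)a^2-70(1+w)a+(21+35w)~,\qquad p_8=a^2-2(7+w)a+(21+7w)~.$$
By Lemma~\ref{lmdiscrim} (applied with $d=10$), each of $p_5$ and $p_8$ has a positive leading coefficient in $a$ and two distinct positive roots; denote them $y_1<y_2$ for $p_5$ and $y_3<y_4$ for $p_8$, so that $p_5<0$ exactly on $(y_1,y_2)$, $p_8<0$ exactly on $(y_3,y_4)$, and all four depend continuously on $w>0$. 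The lemma follows once one shows $(y_1,y_2)\cap(y_3,y_4)=\emptyset$ for every $w>0$.

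The second step is to eliminate $a$. A direct (MAPLE) computation gives
$$\mathrm{Res}\,(p_5,p_8,a)=3969\,(w-1)^2(w+2)^2~,$$
whose only positive root is $w=1$. Hence for $w\in(0,1)\cup(1,\infty)$ no root of $p_5$ equals a root of $p_8$; combined with the fact (Lemma~\ref{lmdiscrim}) that $y_1\neq y_2$ and $y_3\neq y_4$ throughout, this forces the interleaving order of the four numbers $y_1,y_2,y_3,y_4$ to be locally constant in $w$, hence constant on each of the two intervals $(0,1)$ and $(1,\infty)$.

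The remaining step is to evaluate the configuration at one point of each interval, and at $w=1$. For $w=1$ one has $p_5=28(2a-1)(a-2)$ and $p_8=(a-2)(a-14)$, so the intervals are $(1/2,2)$ and $(2,14)$, which are disjoint (they merely touch at the common endpoint $a=2$). For $w=1/2$ and for $w=2$ a direct numerical computation of the four roots yields intervals of the form $(y_1,y_2)$ and $(y_3,y_4)$ with $y_2<y_3$, hence again disjoint. By the previous paragraph $(y_1,y_2)$ and $(y_3,y_4)$ are therefore disjoint for every $w>0$ (in particular for $w>1$), and the system $p_5<0$, $p_8<0$ has no solution, which proves the lemma.

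I expect the only genuinely nonroutine point to be the resultant computation, which is an elimination of $a$ from two quadratics with coefficients polynomial in $w$ — slightly bulky but mechanical, and best handed to MAPLE exactly as in the proof of Lemma~\ref{lm61}. The one subtlety to watch, shared with the $(p_3,p_6)$ subcase there, is that $\mathrm{Res}\,(p_5,p_8,a)$ does vanish at $w=1$; so one must record that at that value the two intervals only touch at the common endpoint $a=2$ rather than overlap, so that $p_5<0$ and $p_8<0$ still cannot hold simultaneously.
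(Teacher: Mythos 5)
Your proposal is correct and follows essentially the same route as the paper: the same expansion of $p_5$ and $p_8$ as quadratics in $a$, the same resultant $3969(w-1)^2(w+2)^2$, and the same test values $w=1/2$, $1$, $2$ combined with a continuity argument to show the two negativity intervals never overlap. Your explicit treatment of the touching point $a=2$ at $w=1$ and the appeal to Lemma~\ref{lmdiscrim} merely make precise what the paper leaves implicit.
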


\begin{proof}
Set $P:=(x+1)^7(x+w)(x-a)^2$. Then 

$$p_5=21+35w-70a-70wa+35a^2+21wa^2~~~\, {\rm and}~~~\, p_8=21+7w-14a-2wa+a^2~.$$
One has Res\,$(p_5,p_8,a)=3969(w+2)^2(w-1)^2$. We list the roots of $p_5$ and $p_8$ for $w=1/2$, $w=1$ and $w=2$:

$$\begin{array}{llllcllll}w=1/2&0.45\ldots&,&1.850\ldots&{\rm and}&1.865\ldots&,&13.13\ldots &;\\ w=1&0.5&,&2&{\rm and}&2&,&14&;\\ w=2&0.54\ldots&,&2.18\ldots&{\rm and}&2.21\ldots&,&15.78\ldots&. \end{array}$$
As in the proof of Lemma~\ref{lm61}, we conclude that the conditions $p_5<0$, $p_8<0$ fail for $w>0$.
\end{proof}

$5^0$. We compute Res\, $(p_5,p_8,a)$ as a function of $v$ and $w$ and then 
set $v:=1+V$, $w:=1+W$. Our aim is to show that in all $12$ cases, 
the leading coefficients of 
$p_5$ and $p_8$ considered as 
quadratic polynomials in $a$ are positive. The rest of the reasoning is 
done by analogy with part $5^0$ of the proof of part (1) of 
Proposition~\ref{propthreecases}. 

$6^0$. It is in the analog of $6^0$ that there is much more technical 
work to be done. Of the twelve cases listed in $4^0$, in three there 
is a single monomial with a negative coefficient, and this is $UV$. 
We list the coefficients of the 
monomials $UV$, $U^2$ and $V^2$ of the cases $(6,1,1)$, $(5,2,1)$ 
and $(4,2,2)$ respectively:

$$
(-10206,35721, 35721)~~~\, ,~~~\, (-20412, 61236, 35721)~~~\, ,~~~\, 
(-40824, 61236, 61236)~.$$
Everywhere in $6^0$ quadratic and biquadratic polynomials 
have negative discriminants. 
There are four cases in which exactly two monomials have negative signs,  
namely $(4,3,1)$, $(3,4,1)$, $(3,3,2)$ and $(2,4,2)$ 
in which we give only the monomials forming quadratic homogeneous 
polynomials with negative discriminants (multiplied by $1$ or $U$); we skip all other monomials (their coefficients are positive):

$$\begin{array}{l}
(-30618UV+76545U^2+35721V^2)+U(-10206UV+221130U^2+91854V^2)\\ \\
(-40824UV+81648U^2+35721V^2)+U(-81648UV+326592U^2+122472V^2)\\ \\
(-61236UV+76545U^2+61236V^2)+U(-20412UV+221130U^2+81648V^2)\\ \\ 
(-81648UV+81648U^2+61236V^2)+U(-163296UV+326592U^2+108864V^2)
\end{array}$$
In the cases $(2,5,1)$ and $(1,6,1)$ there are four and five 
negative monomials respectively. These cases 
are treated in a similar way:

$$\begin{array}{l}
(-51030UV+76545U^2+35721V^2)+U(-187110UV+391230U^2+153090V^2)\\ \\
+U^2(-245430UV+868725U^2+297270V^2)\\ \\ 
+U^3(-86670UV+1094472U^2+352350V^2)\end{array}$$
and 
$$\begin{array}{l}
(-6804UV+6804U^2+3969V^2)+U(-22680UV+28728U^2+12474V^2)\\ \\
+U^2(-29052UV+50436U^2+15849V^2)+U^4(-3252UV+24628U^2+3672V^2)\\ \\ 
+U^3(-16848UV+47088U^2+10368V^2)~.
\end{array}$$
In the case $(2,3,3)$, there are four negative monomials which we include 
in polynomials as follows:

$$\begin{array}{l}
(-91854UV+76545U^2+76545V^2)+(-59778U^2V^2+273375U^4+273375V^4)\\ \\ 
(-30618U^2V+221130U^3+221130V^3-30618UV^2)~.\end{array}$$
For the third polynomial 
in brackets its corresponding inhomogeneous polynomial 

$$-30618x^2+221130x^3+221130-30618x$$
has one negative and two complex conjugate roots. For a univariate real 
polynomial with positive leading coefficient and having only negative 
and complex conjugate roots we say that it is of {\em type P}. It is 
clear that the homogeneous polynomial corresponding to a type P univariate 
polynomial (we say that it is also of type P) is nonnegative. 

In the case $(1,5,2)$, there are seven negative monomials: 

$$\begin{array}{l}
(-102060UV+76545U^2+61236V^2)+U(-374220UV+391230U^2+136080V^2)\\ \\
+(-490860U^3V+868725U^4+10530U^2V^2+369360UV^3+79704V^4)\\ \\
+U^2(513540V^3-210600UV^2-173340U^2V+1094472U^3)\\ \\
+U^2(-215190U^2V^2+855450U^4+372915V^4)\\ \\ 
+U^4V(-64116UV+300060U^2+176760V^2)~.
\end{array}$$
The third and fourth of the polynomials in brackets are of type P 
hence nonnegative.

Finally, in the case $(1,4,3)$ we have also seven negative monomials:

$$\begin{array}{l}
(-122472UV+81648U^2+76545V^2)+(-383940U^2V^2+565056U^4+273375V^4)\\ \\ 
+(326592U^3-244944U^2V-40824UV^2+221130V^3)+\\ \\
+U(-359649U^2V^2+552096U^4+557928V^4)\\ \\
+(-75816U^3V^3+332928U^6+79065V^6)\\ \\ +U(-15066U^3V^3+126720U^6+138096V^6)~.
\end{array}$$
The third and the last two polynomials in brackets are of type P. 
\end{proof}

\section{Proof of Theorem~\protect\ref{Real}\protect\label{secprReal}}

Consider the polynomial  $P=(x+1)^{d-2}(x^2-zx+y)$, 
where the quadratic factor has no real roots,  
i.e. $\displaystyle{z^2<4y}$. Hence $y>0$ and $z>0$ 
(otherwise all coefficients of $P$ must be positive). If the polynomial $P$ defines the SP $\Sigma _{m,n,q}$, then any perturbation of $P$ with $d-2$ distinct negative roots close to $-1$ defines also the SP $\Sigma _{m,n,q}$. 
We expand $P$ in powers of $x$: 
$$P:=x^d+\Sigma^{d} _{j=1} p_jx^{d-j}~,$$ 
where $p_j = C^{j}_{d-2} - C^{j-1}_{d-2}z + C^{j-2}_{d-2}y$ with $C_{\mu}^{\nu}=0$ if $\mu <\nu$. 
The coefficients of $P$ define the SP $\Sigma _{m,n,q}$, 
so 

$$\begin{array}{l}p_j >0~~~\, {\rm for}~~~\,  j=1, \ldots, m-1~~~\, {\rm and~~for}~~~\,  j=m+n, \ldots, d~,~~~\,  
{\rm and}\\ \\ p_j <0~~~\, {\rm for}~~~\, j=m, m+1, \ldots, m+n-1~.\end{array}$$ The latter 
inequalities (combined with $z<2 \sqrt{y}$) yield:
$$ C^{j}_{d-2} + C^{j-2}_{d-2}y<C^{j-1}_{d-2}z<
2C^{j-1}_{d-2}\sqrt{y}, \quad j=m, m+1, \ldots, m+n-1~. $$ 
This means that :
\begin{equation}\label{eqA} \frac{C^{j-1}_{d-2}-\sqrt{\delta_{j-1}}}{C^{j}_{d-2}} 
< \sqrt{y} <  \frac{C^{j-1}_{d-2}+\sqrt{\delta_{j-1}}}{C^{j}_{d-2}}, 
\quad \quad \quad \end{equation}
where $\delta_{j-1} := (C^{j-1}_{d-2})^2 - C^{j-2}_{d-2}C^{j}_{d-2} > 0$.
Indeed, the polynomial $C^{j}_{d-2} 
- 2C^{j-1}_{d-2}\sqrt{y} + C^{j-2}_{d-2}y$ (quadratic in $\sqrt{y}$) has a positive 
discriminant $\delta_{j-1}$ hence its value is negative 
precisely when $\sqrt{y}$ is between its roots. Set  

\begin{equation}\label{eqC}Q^{\pm}(k):=(C^{k}_{d-2} 
\pm \sqrt{\delta_{k}} )/C^{k-1}_{d-2}~. \quad \quad \quad 
\end{equation}

\begin{lm}\label{Qeg}
One has $\displaystyle{Q^{\pm}(k)=\frac{d-k-1}{k} 
(1 \pm A(k))}$, where 

$$A(k)=\sqrt{1-\frac{k(d-k-2)}{(k+1)(d-k-1)}} 
= \sqrt{\frac{(d-1)}{(k+1)(d-k-1)}}~.$$
\end{lm}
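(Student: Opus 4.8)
The plan is to prove Lemma~\ref{Qeg} by a direct computation, substituting the explicit binomial values into the definitions of $\delta_k$ and $Q^{\pm}(k)$ from \eqref{eqC}. First I would recall that $\delta_k = (C^{k}_{d-2})^2 - C^{k-1}_{d-2}C^{k+1}_{d-2}$ and rewrite it as $(C^{k}_{d-2})^2\bigl(1 - C^{k-1}_{d-2}C^{k+1}_{d-2}/(C^{k}_{d-2})^2\bigr)$. Using $C^{k-1}_{d-2}/C^{k}_{d-2} = k/(d-k-1)$ and $C^{k+1}_{d-2}/C^{k}_{d-2} = (d-k-2)/(k+1)$, the ratio $C^{k-1}_{d-2}C^{k+1}_{d-2}/(C^{k}_{d-2})^2$ equals $k(d-k-2)/\bigl((k+1)(d-k-1)\bigr)$, so $\sqrt{\delta_k} = C^{k}_{d-2}\sqrt{1 - k(d-k-2)/\bigl((k+1)(d-k-1)\bigr)}$. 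This already produces the first form of $A(k)$ once we divide through by $C^{k-1}_{d-2}$ in \eqref{eqC}, since $C^{k}_{d-2}/C^{k-1}_{d-2} = (d-k-1)/k$.

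The second step is the elementary algebraic simplification of the quantity under the square root. I would compute
\[
1 - \frac{k(d-k-2)}{(k+1)(d-k-1)} = \frac{(k+1)(d-k-1) - k(d-k-2)}{(k+1)(d-k-1)}
\]
and expand the numerator: $(k+1)(d-k-1) = kd - k^2 - k + d - k - 1 = kd - k^2 - 2k + d - 1$, while $k(d-k-2) = kd - k^2 - 2k$, so the difference is exactly $d-1$. This yields the second form $A(k) = \sqrt{(d-1)/\bigl((k+1)(d-k-1)\bigr)}$ and completes the identity, since then $Q^{\pm}(k) = \frac{d-k-1}{k}\bigl(1 \pm A(k)\bigr)$ follows by combining with the prefactor $C^{k}_{d-2}/C^{k-1}_{d-2} = (d-k-1)/k$ obtained above.

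There is essentially no hard part here: the lemma is a bookkeeping identity, and the only thing to watch is that the binomial ratio identities $C^{j}_{n}/C^{j-1}_{n} = (n-j+1)/j$ are applied with the correct shift (here $n = d-2$), and that $\delta_k > 0$ — which is already noted in the text as a consequence of $\delta_{j-1} > 0$ in \eqref{eqA}, i.e. of Newton-type strict log-concavity of the binomial coefficients, so the square roots are real and the two forms of $A(k)$ agree as nonnegative reals. I would present the computation as a short displayed chain, being careful to keep each display on its own line without internal blank lines, and then state that the claimed equality for $Q^{\pm}(k)$ is immediate.
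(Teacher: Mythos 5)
Your proof is correct and is essentially the same direct computation as the paper's: the paper expands $\delta_k$ fully in factorials to get $\delta_k=\bigl(\tfrac{(d-2)!}{(d-k-1)!(k+1)!}\bigr)^2(k+1)(d-1)(d-k-1)$ and then divides, whereas you factor out $(C^k_{d-2})^2$ and use the consecutive-binomial ratios $C^{k-1}_{d-2}/C^k_{d-2}=k/(d-k-1)$ and $C^{k+1}_{d-2}/C^k_{d-2}=(d-k-2)/(k+1)$, which is slightly tidier bookkeeping but not a different argument. The final simplification $(k+1)(d-k-1)-k(d-k-2)=d-1$ is the same in both.
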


\begin{lm}\label{Qdecr}
The quantities $Q^{\pm}(k)$ are decreasing functions 
in $k$ (for $k=1,2,\ldots,[\frac{d}{2}]$). 
\end{lm}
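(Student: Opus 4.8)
The plan is to exhibit $Q^+(k)$ and $Q^-(k)$ as the two roots of an explicit quadratic polynomial in $Y$, and then to compare, for consecutive values of $k$, where these roots sit relative to each other and to the vertices. Set
$$\phi_k(Y):=k(k+1)Y^2-2(k+1)(d-1-k)Y+(d-1-k)(d-2-k).$$
Using the elementary ratios $C^{k}_{d-2}/C^{k-1}_{d-2}=(d-1-k)/k$ and $C^{k+1}_{d-2}/C^{k-1}_{d-2}=(d-1-k)(d-2-k)/(k(k+1))$, together with the definition (\ref{eqC}) of $Q^{\pm}(k)$ and the identity $\delta_k=(C^{k}_{d-2})^2-C^{k-1}_{d-2}C^{k+1}_{d-2}$, I would check that $Q^+(k)+Q^-(k)$ and $Q^+(k)\,Q^-(k)$ agree with minus the linear coefficient over the leading one, resp. the constant term over the leading one, of $\phi_k$; hence $Q^{\pm}(k)$ are exactly the roots of $\phi_k$. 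Since $\phi_k$ opens upward, $Q^-(k)$ is the smaller root and $Q^+(k)$ the larger one, the minimum is at the midpoint $(d-1-k)/k$, and for $1\le k\le[d/2]-1$ one has $0<Q^-(k)<(d-1-k)/k<Q^+(k)$ (the constant term of $\phi_k$ is then positive and its discriminant equals $4(k+1)(d-1-k)(d-1)>0$).

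The main computation is the identity
$$\phi_{k+1}(Y)-\phi_k(Y)=2(k+1)Y^2-2(d-2k-3)Y-2(d-k-2)=2(Y+1)\bigl((k+1)Y-(d-k-2)\bigr);$$
the point is that this difference, a priori just some quadratic in $Y$, has discriminant $(d-1)^2$ and therefore factors, with roots $-1$ and $(d-k-2)/(k+1)$. Since $\phi_k(Q^{\pm}(k))=0$, it follows that $\phi_{k+1}(Q^{\pm}(k))=2\,(Q^{\pm}(k)+1)\bigl((k+1)Q^{\pm}(k)-(d-k-2)\bigr)$, so (as $Q^{\pm}(k)>-1$) the sign of $\phi_{k+1}(Q^{\pm}(k))$ is that of $(k+1)Q^{\pm}(k)-(d-k-2)$. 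Now $Q^+(k)>(d-1-k)/k>(d-k-2)/(k+1)$, the last inequality being equivalent to $d-1>0$, so $\phi_{k+1}(Q^+(k))>0$; and from $Q^-(k)\,Q^+(k)=(d-1-k)(d-2-k)/(k(k+1))$ together with $Q^+(k)>(d-1-k)/k$ one gets $Q^-(k)<(d-k-2)/(k+1)$, so $\phi_{k+1}(Q^-(k))<0$.

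Finally I would convert these sign facts into comparisons of roots. The polynomial $\phi_{k+1}$ opens upward, has roots $Q^-(k+1)<Q^+(k+1)$, and has vertex at $(d-k-2)/(k+1)$. From $\phi_{k+1}(Q^-(k))<0$ one reads off $Q^-(k+1)<Q^-(k)<Q^+(k+1)$, in particular $Q^-(k+1)<Q^-(k)$; and from $\phi_{k+1}(Q^+(k))>0$ together with $Q^+(k)>(d-k-2)/(k+1)$ (the latter being the midpoint of $[Q^-(k+1),Q^+(k+1)]$, hence larger than $Q^-(k+1)$) one gets $Q^+(k)>Q^+(k+1)$. Therefore $Q^{\pm}(k+1)<Q^{\pm}(k)$ for all $1\le k\le[d/2]-1$, which is exactly the claim that $k\mapsto Q^{\pm}(k)$ is strictly decreasing on $\{1,\ldots,[d/2]\}$. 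I do not expect a genuine obstacle here: the one thing to get right is the factorization of $\phi_{k+1}-\phi_k$, after which the argument is just bookkeeping of signs relative to the two vertices; and as a sanity check one may note that, by Lemma~\ref{Qeg}, $Q^+(k)=\frac{d-1-k}{k}+\frac1k\sqrt{(d-1)(d-1-k)/(k+1)}$ is already visibly a sum of two decreasing positive terms, so the quadratic $\phi_k$ is really only needed to handle $Q^-$.
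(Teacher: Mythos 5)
Your proof is correct, and it takes a genuinely different route from the paper's. The paper works directly from the closed form of Lemma~\ref{Qeg}: for $Q^+$ it argues that the two factors $\frac{d-k-1}{k}$ and $1+A(k)$ are each decreasing, and for $Q^-$ it first rationalizes, writing $Q^-(k)=\frac{(d-k-2)/(k+1)}{1+A(k)}$, and then compares consecutive terms by a direct inequality between nested radicals. You instead exhibit $Q^{\pm}(k)$ as the two roots of the upward-opening quadratic $\phi_k$ (your sum and product of roots do match: $Q^++Q^-=2C^{k}_{d-2}/C^{k-1}_{d-2}=2(d-1-k)/k$ and $Q^+Q^-=C^{k+1}_{d-2}/C^{k-1}_{d-2}=(d-1-k)(d-2-k)/(k(k+1))$), and then track the roots via the sign of $\phi_{k+1}$ at the roots of $\phi_k$. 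I checked the pivotal identity $\phi_{k+1}(Y)-\phi_k(Y)=2(Y+1)\bigl((k+1)Y-(d-k-2)\bigr)$ coefficient by coefficient, as well as the discriminant computations and the comparison $(d-1-k)/k>(d-k-2)/(k+1)$ (equivalent to $d-1>0$); the subsequent sign bookkeeping is sound, and the needed positivity of $Q^{\pm}(k)$ on the relevant range holds. What your approach buys: it treats $Q^+$ and $Q^-$ uniformly, never manipulates radicals (only polynomial identities and positions relative to vertices), and actually yields the stronger interlacing $Q^-(k+1)<Q^-(k)<Q^+(k+1)<Q^+(k)$. It also sidesteps a fragile point in the paper's argument: the factor $1+A(k)$ is in fact not decreasing all the way up to $k=[d/2]$ (e.g.\ $d=10$, $k=4\to5$), since $(k+1)(d-k-1)$ peaks at $k=d/2-1$, so the paper's justification for the $Q^+$ part needs a small repair near the top of the range, whereas your argument does not. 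What the paper's approach buys is brevity for $Q^+$ and no need to introduce the auxiliary polynomial $\phi_k$.
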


Lemmas~\ref{Qdecr} and \ref{Qeg} are proved after 
the proof of Theorem~\ref{Real}. 
It follows from Lemma~\ref{Qdecr} that one can find 
a value of $y$ satisfying conditions (\ref{eqA}) if 

\begin{equation}\label{eqE} 
Q^{-}(m-1) < Q^{+}(m+n-2) \quad \quad \quad 
\end{equation}
or equivalently 

\begin{equation}\label{eqaf}
 a-f<aB+fG~,
\end{equation}
where 
\begin{eqnarray*}
a= \frac{d-m}{m-1}>0\qquad \qquad \qquad \qquad\quad B\;=\; 
\sqrt{1- \frac{(m-1)(d-m-1)}{m(d-m)}} \qquad \qquad\\ \\
f=\frac{d-m-n+1}{m+n-2}>0 \qquad\qquad \qquad G\;=\;\sqrt{1- 
\frac{(m+n-2)(d-m-n)}{(m+n-1)(d-m-n+1)}}
\end{eqnarray*}
One has 
$$a-f=\frac {(n-1)(d-1)}{(m+n-2)(m-1)} >0$$ 
which permits to take squares in (\ref{eqaf}) to obtain the condition : 

\begin{equation}\label{eqF}H:=\frac{(a-f)^2-(aB)^2-(fG)^2}{2af}<GB \quad \quad \quad \end{equation}
which is equivalent to (\ref{eqE}). If $H<0$, then (\ref{eqF}) is trivially true. If $H\geq 0$, then (\ref{eqF}) is equivalent to $H^2<(GB)^2$, i.e. to (\ref{eqsuffcond}) (the latter equivalence can be proved using MAPLE). 
Theorem~\ref{Real} is proved.

\begin{proof}[Proof of Lemma \ref{Qeg}]

\begin{eqnarray*}
\delta_k&=&(C_{d-2}^k)^2-C_{d-2}^{k-1}C_{d-2}^{k+1}\\
&=& \left(\frac{(d-2)\ldots (d-k-1)}{k!}\right)^2-\frac{(d-2)
\ldots (d-k)}{(k-1)!}\cdot\frac{(d-2)\ldots (d-k-2)}{(k+1)!}\\
&=& \left ( \frac{(d-2)\ldots (d-k)(d-k-1)(k+1)}{(k+1)!} \right)^2\\ &&
\hspace{24mm}-\frac{(d-2)\ldots (d-k)k(k+1)(d-2)\ldots (d-k-2)}{((k+1)!)^2}\\
&=& \left ( \frac{(d-2)\ldots (d-k)}{(k+1)!} \right)^2
\left\{  (d-k-1)^2(k+1)^2\right. \\ &&\hspace{44mm}\left. -k(k+1) (d-k-1)(d-k-2) \right \}\\
&=& \left ( \frac{(d-2)!}{(d-k-1)!(k+1)!} \right)^2  
(k+1)(d-1)(d-k-1)~.
\end{eqnarray*} \\ \\
We substitute this expression of $\delta_k$ in (\ref{eqC}) to obtain 

\begin{eqnarray*}
Q^{\pm}(k)&=& \frac{(d-k-1)!(k-1)!}{(d-2)!}\cdot \left( 
\frac{(d-2)!}{(d-k-2)!k!} \pm \sqrt{\delta_k}\right )\\
&=& \frac{(d-k-1)}{k} \pm \frac{1}{k(k+1)}
\sqrt{(k+1)(d-1)(d-k-1)}\\
&=& \frac{(d-k-1)}{k} \pm \sqrt{\frac{(d-k-1)^2}{k^2}-
\frac{(d-k-1)(d-k-2)}{k(k+1)}}\\
&=& \frac{(d-k-1)}{k}\left( 1 \pm \sqrt{1- 
\frac{k(d-k-2)}{(k+1)(d-k-1)}} \right) ~.
\end{eqnarray*}
\end{proof}

\begin{proof}[Proof of Lemma \ref{Qdecr}]
Both factors $\displaystyle{\frac{d-k-1}{k}}$ and 
$1+A(k)$ of $Q^{+}(k)$ are decreasing in $k$ (for $k=1,2,\ldots,[\frac{d}{2}]$) hence $Q^{+}(k)$ is also decreasing. 
We represent the quantity 
$Q^{-}(k)$ in the form $$Q^{-}(k) = 
\left( \displaystyle{\frac{d-k-2}{k+1}}\right) /\left( 1+
\sqrt{\displaystyle{\frac{d-1}{(k+1)(d-k-1)}}} \right)$$
The inequality $Q^{-}(k) > Q^{-}(k+1)$ is equivalent to 
$$\begin{array}{ccc}\frac{d-k-2}{k+1} + \frac{d-k-2}{k+1} 
\sqrt{\frac{d-1}{(k+2)(d-k-2)}}&>&\frac{d-k-3}{k+2} 
+ \frac{d-k-3}{k+2} \sqrt{\frac{d-1}{(k+1)(d-k-1)}}
\end{array} $$ \\
This follows from $\displaystyle{\frac{d-k-2}{k+1} > 
\frac{d-k-3}{k+2}}$ , $\displaystyle{\frac{1}{(k+1) 
\sqrt{k+2}} > \frac{1}{(k+2) \sqrt{k+1}}}$ and \\ \\
$(d-k-2)(d-k-1) > (d-k-3)^2$. 
\end{proof}

\end{document}